\newcommand{\figurezero}
{
\begin{tikzpicture}[baseline={([yshift=-1ex]current bounding box.center)}, scale=1.5, vertex/.style={draw,circle,scale=0.5,thick,fill=black!100, /pgf/outer sep=5},  arc/.style= {->,thick, >={Triangle[length=0.6em, scale width=0.8]}}]
	\foreach [count=\ile] \i/\j in {0/v, 72/u, 144/w', 216/v', 288/w}{
		\node[vertex] (p\ile) at (\i : 1){};
		\node (q\ile) at (\i : 1.2) {$\j$};
	};
	
    \begin{scope}[on background layer]
	\foreach \i/\j in {4/1, 3/5, 5/4, 1/3, 2/1, 2/4, 5/2, 3/2}{
		\draw[arc, color=black!20] (p\i) edge (p\j);
	};
	
	\foreach \i/\j in {1/5, 4/3}{
		\draw[arc, color=black!10!green] (p\i) edge (p\j);
	};
	\end{scope}
\end{tikzpicture}
}
\newcommand{\figureTa}{
\begin{tikzpicture}[baseline={([yshift=-1ex]current bounding box.center)}, scale=1, vertex/.style={draw,circle,scale=0.5,thick,fill=black!100, /pgf/outer sep=5},  arc/.style= {->,thick, >={Triangle[length=0.6em, scale width=0.8]}}]

	\foreach[count=\j] \i/\k in {18/w_1, 90/w_2, 162/w_3, 234/w_4, 306/w_5}{
		\node[vertex] (p\j) at (\i : 1){};
		\node (q\j) at (\i : 1.3){$\k$};
	};
	
	\foreach \i/\j in {1/2, 2/3, 3/4, 4/5, 5/1, 3/1, 4/2, 5/3, 4/1, 2/5}{
		\draw[arc] (p\i) edge (p\j);
	};
\end{tikzpicture}
}
\newcommand{\figureTb}{
\begin{tikzpicture}[baseline={([yshift=-1ex]current bounding box.center)}, scale=1, vertex/.style={draw,circle,scale=0.5,thick,fill=black!100, /pgf/outer sep=5},  arc/.style= {->,thick, >={Triangle[length=0.6em, scale width=0.8]}}]

	\foreach[count=\j] \i/\k in {18/w_1, 90/w_2, 162/w_3, 234/w_4, 306/w_5}{
		\node[vertex] (p\j) at (\i : 1){};
		\node (q\j) at (\i : 1.3){$\k$};
	};
	
	\foreach \i/\j in {5/2, 2/4, 1/4, 1/3, 3/5, 4/5, 1/2, 3/4, 5/1, 2/3}{
		\draw[arc] (p\i) edge (p\j);
	};
\end{tikzpicture}
}
\newcommand{\figureTc}{
\begin{tikzpicture}[baseline={([yshift=-1ex]current bounding box.center)}, scale=1, vertex/.style={draw,circle,scale=0.5,thick,fill=black!100, /pgf/outer sep=5},  arc/.style= {->,thick, >={Triangle[length=0.6em, scale width=0.8]}}]

	\foreach[count=\j] \i/\k in {18/w_1, 90/w_2, 162/w_3, 234/w_4, 306/w_5}{
		\node[vertex] (p\j) at (\i : 1){};
		\node (q\j) at (\i : 1.3){$\k$};
	};
	
	\foreach \i/\j in {1/2, 1/3, 1/4, 2/3, 2/4, 2/5, 3/4, 3/5, 4/5, 5/1}{
		\draw[arc] (p\i) edge (p\j);
	};
\end{tikzpicture}
}
\newcommand{\figureTd}{
\begin{tikzpicture}[baseline={([yshift=-1ex]current bounding box.center)}, scale=1, vertex/.style={draw,circle,scale=0.5,thick,fill=black!100, /pgf/outer sep=5},  arc/.style= {->,thick, >={Triangle[length=0.6em, scale width=0.8]}}]

	\foreach[count=\j] \i/\k in {18/w_1, 90/w_2, 162/w_3, 234/w_4, 306/w_5}{
		\node[vertex] (p\j) at (\i : 1){};
		\node (q\j) at (\i : 1.3){$\k$};
	};
	
	\foreach \i/\j in {1/2, 3/4, 4/5, 5/3, 1/3, 1/4, 5/1, 2/3, 2/4, 2/5}{
		\draw[arc] (p\i) edge (p\j);
	};
\end{tikzpicture}
}
\newcommand{\figureTe}{
\begin{tikzpicture}[baseline={([yshift=-1ex]current bounding box.center)}, scale=1, vertex/.style={draw,circle,scale=0.5,thick,fill=black!100, /pgf/outer sep=5},  arc/.style= {->,thick, >={Triangle[length=0.6em, scale width=0.8]}}]

	\foreach[count=\j] \i/\k in {18/w_1, 90/w_2, 162/w_3, 234/w_4, 306/w_5}{
		\node[vertex] (p\j) at (\i : 1){};
		\node (q\j) at (\i : 1.3){$\k$};
	};
	
	\foreach \i/\j in {1/2, 1/3, 1/4, 2/3, 4/2, 2/5, 3/4, 3/5, 4/5, 5/1}{
		\draw[arc] (p\i) edge (p\j);
	};
\end{tikzpicture}
}
\newcommand{\figureTlaygadget}{
\begin{tikzpicture}[baseline={([yshift=-1ex]current bounding box.center)}, scale=1, vertex/.style={draw,circle,scale=0.5,thick,fill=black!100, /pgf/outer sep=5},  arc/.style= {->,thick, >={Triangle[length=0.6em, scale width=0.8]}}]

	\foreach[count=\j] \i in {(0,0), (2,0), (1,1), (1,-1), (0,2), (2,2)}{
		\node[vertex] (p\j) at \i {};
	};
	
	\node (q1)[below=2pt] at (0,0){$u$};
	\node (q11)[left=2pt] at (0,0){$(0,1)$};
	\node (q2)[below=2pt] at (2,0){$v$};
	\node (q21)[right=2pt] at (2,0){$(1,0)$};
	\node (q3)[below=2pt] at (1,1){$w$};
	\node (q4)[above=2pt] at (1,-1){$z$};
	\node (q5)[above=2pt] at (0,2){$x$};
	\node (q6)[above=2pt] at (2,2){$y$};
	\node (q41)[below=2pt] at (1,-1){$(1,1)$};
	\node (q51)[left=2pt] at (0,2){$(1,2)$};
	\node (q61)[right=2pt] at (2,2){$(2,1)$};
		
	\foreach \i/\j in {1/2, 2/3, 3/1, 1/4, 4/2, 1/5, 5/6, 6/2}{
		\draw[arc] (p\i) edge (p\j);
	};
\end{tikzpicture}
}
\newcommand{\figureTlaygadgettwo}{
\begin{tikzpicture}[baseline={([yshift=-1ex]current bounding box.center)}, scale=1, vertex/.style={draw,circle,scale=0.5,thick,fill=black!100, /pgf/outer sep=5},  arc/.style= {->,thick, >={Triangle[length=0.6em, scale width=0.8]}}]

	\foreach[count=\j] \i in {(0,0), (2,0), (1,1), (1,-1), (0,2), (2,2)}{
		\node[vertex] (p\j) at \i {};
	};
	
	\node (q1)[below=2pt] at (0,0){$u$};
	\node (q11)[left=2pt] at (0,0){$(3,1)$};
	\node (q2)[below=2pt] at (2,0){$v$};
	\node (q21)[right=2pt] at (2,0){$(1,0) (1, 3)$};
	\node (q3)[below=2pt] at (1,1){$w$};
	\node (q4)[above=2pt] at (1,-1){$z$};
	\node (q5)[above=2pt] at (0,2){$x$};
	\node (q6)[above=2pt] at (2,2){$y$};
	\node (q41)[below=2pt] at (1,-1){$(0,1) (1,1)$};
	\node (q51)[left=2pt] at (0,2){$(1,2)$};
	\node (q61)[right=2pt] at (2,2){$(2,1)$};
		
	\foreach \i/\j in {1/2, 2/3, 3/1, 1/4, 4/2, 1/5, 5/6, 6/2}{
		\draw[arc] (p\i) edge (p\j);
	};
\end{tikzpicture}
}
\newcommand{\figureTg}{
\begin{tikzpicture}
[baseline={([yshift=-1ex]current bounding box.center)}, scale=1, vertex/.style={draw,circle,scale=0.5,thick,fill=black!100, /pgf/outer sep=5},  arc/.style= {->,thick, >={Triangle[length=0.6em, scale width=0.8]}}]

\foreach[count=\j] \i in {1,2,3,4,5,6}{
	\node[vertex] (p\i) at (\i,0){};
	\node (q\i) at (\i, -0.3){$v_{\i}$};
};

\node[vertex] (p0) at (3.5, 2){};
\node (q0) at (3.5, 2.3){$v_7$};

\foreach \i/\j in {1/2, 2/3, 3/4, 4/5, 5/6, 0/4, 3/0, 0/2, 5/0}{
	\draw[arc] (p\i) edge (p\j);
};

\foreach \i/\j in {0/1, 6/0}{
	\draw[arc, bend right] (p\i) edge (p\j);
};
\end{tikzpicture}
}
\newcommand{\abs}[1]{\left\vert#1\right\vert}
\newcommand{\CC}[1]{\overrightarrow{C\hspace{4pt}}_{\hspace{-5pt}#1}}
\newcommand{\TT}[1]{\overrightarrow{T\hspace{4pt}}_{\hspace{-5pt}#1}}
\newcommand{\PP}[1]{\overrightarrow{P\hspace{4pt}}_{\hspace{-5pt}#1}}
\newcommand{\KK}[1]{\overrightarrow{K\hspace{4pt}}_{\hspace{-5pt}#1}}
\newcommand{\BB}[1]{\overrightarrow{K\hspace{4pt}}_{\hspace{-5pt}#1}}
\newcommand{\PPP}[2]{\overrightarrow{P\hspace{4pt}}_{\hspace{-5pt}#1}({#2})}
\newcommand{\CCC}[2]{\overrightarrow{C\hspace{4pt}}_{\hspace{-5pt}#1}({#2})}
\newcommand{\bipart}[2]{#1 \Rightarrow #2}
\newcommand{\Dout}[1]{\mathcal{D}_{#1}}
\newcommand{\Lout}[1]{\mathcal{L}_{#1}}
\DeclareMathOperator{\ex}{ex}
\DeclareMathOperator{\exd}{ex_d}
\DeclareMathOperator{\exo}{ex_o}
\DeclareMathOperator{\dom}{dom}
\let\epsilon\varepsilon
\title{Tur\'an problems for oriented graphs}
\author{
Andrzej Grzesik\thanks{Faculty of Mathematics and Computer Science, Jagiellonian University, {\L}ojasiewicza 6, 30-348 Krak\'{o}w, Poland. E-mail: {\tt Andrzej.Grzesik@uj.edu.pl}. Supported by the National Science Centre grant 2021/42/E/ST1/00193.}\and Justyna Jaworska\thanks{Faculty of Mathematics and Computer Science, Jagiellonian University, {\L}ojasiewicza 6, 30-348 Krak\'{o}w, Poland. E-mail: {\tt justynajoanna.jaworska@student.uj.edu.pl}.}\and
Bart\l{}omiej Kielak\thanks{Faculty of Mathematics and Computer Science, Jagiellonian University, {\L}ojasiewicza 6, 30-348 Krak\'{o}w, Poland. E-mail: {\tt bartlomiej.kielak@doctoral.uj.edu.pl}.}\and Aliaksandra Novik\thanks{Section of Mathematics, École Polytechnique Fédérale de Lausanne, CH-1015 Lausanne, Switzerland. E-mail: {\tt aliaksandra.novik@epfl.ch}.}\and 
Tomasz \'Slusarczyk\thanks{Department of Mathematics, Massachusetts Institute of Technology, Cambridge, MA 02139, USA. E-mail: {\tt tomaszsl@mit.edu}.}}
\date{}
\theoremstyle{plain}
\newtheorem{theorem}{Theorem}
\newtheorem{conjecture}[theorem]{Conjecture}
\newtheorem{proposition}[theorem]{Proposition}
\newtheorem{claim}[theorem]{Claim}
\newtheorem{observation}[theorem]{Observation}
\newtheorem{problem}[theorem]{Problem}
\theoremstyle{definition}
\newtheorem{example}[theorem]{Example}
\newtheorem{definition}[theorem]{Definition}
\begin{document}

\maketitle

\begin{abstract}
A classical Tur\'an problem asks for the maximum possible number of edges in a graph of a~given order that does not contain a particular graph $H$ as a subgraph. It is well-known that the chromatic number of $H$ is the graph parameter which describes the asymptotic behavior of this maximum. 
Here, we consider an analogous problem for oriented graphs, where compressibility plays the role of the chromatic number. 
Since any oriented graph having a directed cycle is not contained in any transitive tournament, it makes sense to consider only acyclic oriented graphs as forbidden subgraphs. 
We provide basic properties of the compressibility, show that the compressibility of acyclic oriented graphs with out-degree at most 2 is polynomial with respect to the maximum length of a directed path, and that the same holds for a larger out-degree bound if the Erd\H os-Hajnal conjecture is true.
Additionally, generalizing previous results on powers of paths and arbitrary orientations of cycles, we determine the compressibility of acyclic oriented graphs with a restricted structure.
\end{abstract}

\section{Introduction}

For a~graph $H$, we denote by $\ex(n, H)$ the maximum possible number of edges in a~graph on $n$~vertices which does not contain $H$ as a~subgraph. The problem of determining the value of $\ex(n, H)$ for different graphs $H$ is one of the most fundamental questions in Extremal Graph Theory. 
Erd\H os and Stone \cite{ESS} found a tight asymptotic bound for $\ex(n, H)$ in terms of the chromatic number $\chi(H)$ of $H$ whenever $\chi(H) > 2$. Whereas for $\chi(H) = 2$, i.e., when $H$ is bipartite, no general bound is known and partial results include bounds for complete bipartite graphs \cite{KST54}, even cycles~\cite{BS74} or bipartite graphs with bounded degeneracy \cite{AKS03, GJN22}, for more details see the survey \cite{FS}.
The notion of $\ex(n, H)$ naturally generalizes to the setting when a family of graphs $\mathcal{H}$ is forbidden. 
It occurs that if we define $\chi(\mathcal{H})$ as the minimum of $\chi(H)$ for $H \in \mathcal{H}$, then Erd\H os-Stone Theorem still holds and gives tight asymptotic bounds for $\chi(\mathcal{H}) > 2$.

The notion of $\ex(n, H)$ can be defined similarly for directed graphs and oriented graphs. Recall that by a~\emph{directed graph} we mean a~pair $H = (V(H), E(H))$, where $V(H)$ is a~set of vertices and $E(H)$ is a~set of ordered pairs of different vertices called \emph{arcs}. Whereas by an \emph{oriented graph} we understand a~directed graph in which any two vertices are joined by at most one arc, in other words, an orientation of a graph.
To avoid ambiguity, we use notation $\exd$ in the setting of directed graphs and $\exo$ in the setting of oriented graphs.

Research on this problem in the directed setting can be traced back to the works of Brown, Erd\H os, Harary, H\"aggkvist, Simonovits, and Thomassen \cite{BES85, BES73, BH69, HT76}.  In particular, Brown, Erd\H os, and Simonovits \cite{BES73} proved that for every family of directed graphs $\mathcal H$ there exists a sequence $(G_n)_{n\geq1}$ of $n$-vertex graphs not containing any $H \in \mathcal H$ as a subgraph, such that each $G_n$ is a~blow-up of some fixed directed graph~$D$ and $\exd(n, \mathcal{H}) = \abs{E(G_n)} + o(n^2)$. Even though the theorem does not give much information about the graph $D$ itself, Valadkhan \cite{Valadkhan} observed that in the case of oriented graphs one may assume that $D$ is a tournament. It is clear that $D$ is the largest tournament whose blow-ups do not contain any $H \in \mathcal H$, which leads to the following crucial definition and theorem. 

\begin{definition}
The \emph{compressibility} of a family of oriented graphs $\mathcal H$, denoted by $\tau(\mathcal H)$, is the smallest $k \in \mathbb N$ such that for every tournament $T$ on $k$ vertices there exists $H \in \mathcal H$ which is homomorphic to $T$. If no such $k$ exists, we put $\tau(\mathcal H) = \infty$. For brevity, we define compressibility of an oriented graph $H$ as $\tau(H) := \tau(\{ H\})$. 
\end{definition}

\begin{theorem}[Valadkhan \cite{Valadkhan}]\label{thm:turan_for_oriented} 
For any family $\mathcal H$ of oriented graphs,
$$ \exo (n, \mathcal H) = \left(1 - \frac{1}{\tau(\mathcal{H})-1}\right)\binom{n}{2} + o(n^2).$$
\end{theorem}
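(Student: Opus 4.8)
The plan is to establish separately the two matching bounds $\exo(n,\mathcal H)\le(1-\tfrac1{\tau-1})\binom n2+o(n^2)$ and $\exo(n,\mathcal H)\ge(1-\tfrac1{\tau-1})\binom n2+o(n^2)$, writing $\tau:=\tau(\mathcal H)$ --- the upper bound from the Brown--Erd\H os--Simonovits structure theorem together with Valadkhan's refinement (both recalled above), the lower bound from an explicit construction. We may assume $2\le\tau<\infty$: if $\tau\le 1$ then some member of $\mathcal H$ has no arc and the right-hand side degenerates, while if $\tau=\infty$ then by definition some $n$-vertex tournament $T$ satisfies ``no $H\in\mathcal H$ is homomorphic to $T$'', hence $T$ contains no $H\in\mathcal H$ as a subgraph (an inclusion of oriented graphs is a homomorphism), so $\exo(n,\mathcal H)=\binom n2$, matching the formula.

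The technical core is the following lemma, which I would prove first: \emph{for a tournament $T$ and an oriented graph $H$, a balanced blow-up of $T$ with every part of size at least $\abs{V(H)}$ contains $H$ as a subgraph if and only if there is a homomorphism $H\to T$.} The forward direction is immediate: collapsing each part of a blow-up to the vertex it replaced sends arcs of $H$ to arcs of $T$, because in a blow-up all arcs run between parts and in the direction prescribed by $T$. For the converse, given a homomorphism $\varphi\colon V(H)\to V(T)$, each fiber $\varphi^{-1}(v)$ is an independent set of $H$ --- otherwise $T$ would have a loop --- of size at most $\abs{V(H)}$, so it injects into the part $P_v$; and between any two distinct fibers all arcs of $H$ point the way $T$ does, since a tournament has no digon; hence $H$ embeds. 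This lemma is precisely where ``$H$ is homomorphic to a tournament'' meets ``$G$ has many edges''.

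Granting the lemma, the upper bound goes as follows. By Brown--Erd\H os--Simonovits and Valadkhan's observation there is a tournament $D$ and blow-ups $G_n$ of $D$ with $\exo(n,\mathcal H)=\abs{E(G_n)}+o(n^2)$, where $D$ is a largest tournament whose blow-ups contain no member of $\mathcal H$. By the lemma this last property is equivalent to ``no $H\in\mathcal H$ is homomorphic to $D$''. Since deleting a vertex of a tournament cannot create a homomorphism from a member of $\mathcal H$ (compose it with the inclusion of the subtournament), the set of $k$ for which some $k$-vertex tournament receives no $H\in\mathcal H$ homomorphically is an initial segment of $\mathbb N$, whose maximum is $\tau-1$ by the definition of compressibility; hence $\abs{V(D)}=\tau-1$. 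As $G_n$ is a blow-up of a $(\tau-1)$-vertex digraph with part sizes $n_1,\dots,n_{\tau-1}$, we get $\abs{E(G_n)}\le\sum_{i<j}n_in_j=\tfrac12\bigl(n^2-\sum_i n_i^2\bigr)\le\bigl(1-\tfrac1{\tau-1}\bigr)\tfrac{n^2}2$, so $\exo(n,\mathcal H)\le(1-\tfrac1{\tau-1})\binom n2+o(n^2)$. For the lower bound, the definition of $\tau$ gives a tournament $T$ on $\tau-1$ vertices with no $H\in\mathcal H$ homomorphic to it; its balanced blow-up $B_n$ on $n$ vertices contains no member of $\mathcal H$ by the lemma, and $\abs{E(B_n)}=(1+o(1))\binom{\tau-1}{2}\bigl(\tfrac n{\tau-1}\bigr)^2=(1-\tfrac1{\tau-1})\binom n2+o(n^2)$, so $\exo(n,\mathcal H)\ge(1-\tfrac1{\tau-1})\binom n2+o(n^2)$. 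Combining the two bounds proves the theorem.

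The step I expect to need the most care is making the passage ``$D$ is a largest tournament whose blow-ups avoid $\mathcal H$'' $\Rightarrow$ ``$\abs{V(D)}=\tau-1$'' airtight: it genuinely uses both directions of the blow-up/homomorphism lemma together with the monotonicity-under-vertex-deletion remark, though each ingredient is short. One can also dispense with Brown--Erd\H os--Simonovits and prove the upper bound directly: apply the regularity lemma (or Erd\H os--Stone to the underlying undirected graph) to pull a large complete $\tau$-partite blow-up out of any sufficiently dense $n$-vertex oriented graph, then pigeonhole over the $\binom{\tau}{2}$ cluster-pairs to refine it to a large blow-up of an \emph{actual} tournament on $\tau$ vertices, and finish via the definition of $\tau$ and the lemma; there the Ramsey-type refinement of the blow-up is the delicate step.
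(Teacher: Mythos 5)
Your proposal is correct and follows essentially the route the paper itself indicates: the paper cites this theorem to Valadkhan without proof, but its introduction sketches exactly your derivation (the Brown--Erd\H{o}s--Simonovits structure theorem, Valadkhan's refinement that $D$ may be taken to be a tournament, and the observation that $D$ is the largest tournament whose blow-ups avoid $\mathcal H$). Your blow-up/homomorphism lemma and the monotonicity-under-vertex-deletion argument correctly make precise the step ``$\abs{V(D)}=\tau(\mathcal H)-1$'' that the paper dismisses as clear, and your handling of the degenerate cases $\tau\le 1$ and $\tau=\infty$ is fine.
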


Therefore, the compressibility plays the same role in the context of oriented graphs as the chromatic number in the context of graphs and the Erd\H{o}s-Stone Theorem. In particular, determining the compressibility of a graph or a family of graphs is asymptotically solving the respective problem on the maximum number of arcs in oriented graphs of a given order. 

Here, we focus on properties of $\tau(\mathcal{H})$ when $\mathcal{H}$ has a~single member. (In contrast to the chromatic number, in general $\tau(\mathcal{H})$ may differ from $\min \{\tau(H) : H \in \mathcal H\}$, see Example \ref{ex:compressibility_of_a_family}.) 
If an oriented graph contains a~directed cycle, and therefore it cannot be mapped homomorphically to any transitive tournament, then its compressibility is infinite. Therefore, we shall consider only acyclic oriented graphs. It is also easy to notice that a~transitive tournament on $k$~vertices does not contain a homomorphic image of any acyclic oriented graph with a directed path of order greater than $k$, hence $\tau(H)$ is always at least the maximum order $p(H)$ of a directed path in $H$. 
In fact, $\tau(H)$ can grow exponentially in terms of $p(H)$ as witnessed by transitive tournaments (Example \ref{ex:transitive_tournaments}) or particular orientations of complete bipartite graphs (Proposition \ref{prop:T_3-free_not_z_bounded}).
Therefore, it is natural to ask, as in~\cite{Valadkhan}, for which families of acyclic oriented graphs the growth is polynomial, or for which the trivial lower bound is optimal, i.e.,~$\tau(H)=p(H)$.

We show that the compressibility of acyclic oriented graphs with out-degree at most $2$ is polynomial with respect to the maximum order of a~directed path (Theorem \ref{thm:2-out-degree_bounded}), and that the same holds for a larger out-degree bound under the additional assumption that the Erd\H os-Hajnal conjecture holds (Theorem~\ref{thm:k-out-degree_bounded}).
Additionally, generalizing results for the square of a path, we determine the compressibility of acyclic oriented graphs with out-degree at most $2$ having restricted structure (Theorem~\ref{thm:path_with_steps}).
Finally, generalizing the result by Valadkhan \cite{Valadkhan} for acyclic orientations of cycles, we prove that the equality $\tau(H)=p(H)$ holds for oriented graphs $H$ with restricted distances of vertices to sinks and sources (Theorem~\ref{thm:l-layered}).

\section{Notation and basic properties of compressibility}

First, we shall introduce the notation used throughout the paper. Let $\TT{k}$ denote the transitive tournament on $k$ vertices. Let $\PP{k}$ be the \emph{directed path} on $k$ vertices, i.e.,~an orientation of a~path with all arcs directed towards the same end-point of the path. Similarly, let $\CC{k}$ be the \emph{directed cycle} on $k$ vertices, i.e.,~a~cyclic orientation of a~cycle of length $k$. Finally, let $\BB{s,t}$ denote the orientation of a~complete bipartite graph $K_{s,t}$ with all arcs directed towards the part of size $t$. If $G$ is an oriented graph and $v \in V(G)$, then we use the standard notation $d^+(v)$ and $d^-(v)$ for the out-degree and in-degree of a~vertex $v$ in $G$, respectively, and write $N^+(v)$ for the out-neighborhood of a~vertex $v$ in $G$. 

Let $G$ and $H$ be oriented graphs. By $G \odot H$ we mean the~\emph{composition} of oriented graphs, i.e., an~oriented graph created by replacing each vertex of $G$ by a~copy of $H$ and each arc of $G$ by $\BB{|H|,|H|}$ directed accordingly to the direction of the arc of $G$. Also, define $\bipart{G}{H}$ as the disjoint sum of $G$ and $H$ with all possible arcs from vertices of $G$ to vertices of $H$. In particular, if $G$ and $H$ are independent sets of size $s$ and $t$ respectively, then $\bipart{G}{H}$ is isomorphic to $\BB{s,t}$. 
By a~\emph{blow-up} of an oriented graph $G$ we mean a~graph created by replacing each vertex $v$ of $G$ by some independent set $I_v$ and each arc $uv$ of $G$ by $\BB{|I_u|,|I_v|}$. 
We say that $G$ is \emph{$H$-free} if $G$ does not contain a~subgraph isomorphic to $H$. If $\mathcal H$ is a~family of graphs, we say that $G$ is $\mathcal H$-free if $G$ is $H$-free for every $H \in \mathcal{H}$. If $H$ is a~subgraph of $G$ isomorphic to $H'$, we refer to $H$ as a~\emph{copy} of $H'$ in $G$. We write $H \to G$ if there exists a~homomorphism from $H$ to $G$, which is equivalent to saying that $H$ is a subgraph of some blow-up of $G$.

The compressibility of some particular graphs can be easily derived, for instance for directed paths.

\begin{example}\label{ex:paths}
For any $k \geq 1$, $\tau(\PP{k}) = k$, as every tournament on $k$ vertices contains a~copy of $\PP{k}$, i.e.,~a~Hamiltonian path, while there is no homomorphism $\PP{k} \to \TT{k-1}$.
\end{example}

If in the definition of compressibility we ask for the existence of an \emph{injective} homomorphism from $H$ to every tournament of a given order, then we obtain the definition of a \emph{1-color oriented Ramsey number}. See \cite{MT} for more information on this concept. As some graphs have no homomorphism into smaller oriented graphs, bounds on their compressibility follow from known bounds on their 1-color oriented Ramsey number.

\begin{example}\label{ex:transitive_tournaments}
Since the compressibility of a transitive tournament $\TT{k}$ is equal to the 1-color oriented Ramsey number of $\TT{k}$, standard probabilistic arguments \cite{EM, Stearns} imply that 
\[c_12^{k/2} \leq \tau(\TT{k}) \leq c_2 2^k\]
for some constants $c_1, c_2 > 0$ and any $k \geq 1$. These are essentially the best known general bounds.
\end{example}

In general, the compressibility of a family of graphs $\mathcal{H}$ can differ from $\min \{\tau(H) : H \in \mathcal H\}$ significantly. 

\begin{example}\label{ex:compressibility_of_a_family}
If $\mathcal H = \{\PP{2^k}, \TT{k}\}$ for any $k \geq 1$, then $\tau(\PP{2^k}) = 2^k$ and $\tau(\TT{k}) \geq c 2^{k/2}$ for some constant $c > 0$, but $\tau(\mathcal H) = k$, since each tournament $T$ on $k$ vertices either contains $\CC{3}$, and therefore there exists a~homomorphism $\PP{2^k} \to T$, or is transitive.
\end{example}

Let $p(H)$ be the order of a longest directed path in~$H$. By Example \ref{ex:paths}, $p(H)$ can be equivalently defined as the smallest $k$ for which there exists a~homomorphism $H \to \TT{k}$. In particular, Example~\ref{ex:transitive_tournaments} implies that the compressibility $\tau(H)$ is bounded exponentially in terms of $p(H)$. This motivates the following definition.

\begin{definition}\label{problem:delta-bounded}
Let $\mathcal{G}$ be a~family of acyclic oriented graphs. We say that $\mathcal{G}$ is \emph{polynomially $\tau$-bounded} if there exist constants $c, d$ such that for every $H \in \mathcal{G}$, we have
\[\tau(H) \leq c p(H)^d.\]
\end{definition}

Valadkhan \cite{Valadkhan} observed that containing a~large transitive tournament is not a necessary condition to have $\tau(H)$ exponentially large in terms of $p(H)$. Even forbidding $\TT{3}$ is not enough to guarantee polynomial $\tau$-boundedness.

\begin{proposition}[Valadkhan \cite{Valadkhan}]\label{prop:T_3-free_not_z_bounded}
For $n \geq 1$, let $H_n$ be the only acyclic orientation of $K_{n,n}$ such that $p(H_n) = 2n$. Then, $\tau(H_n) \geq 2^{n/2}$.
\end{proposition}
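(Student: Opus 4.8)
The plan is to make the structure of $H_n$ explicit, observe that any homomorphism from $H_n$ into a tournament must be injective, and then finish by a first-moment computation over random tournaments.

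First I would record the structure. Up to isomorphism $H_n$ is the orientation of $K_{n,n}$ with parts $\{a_1,\dots,a_n\}$ and $\{b_1,\dots,b_n\}$ in which $a_i\to b_j$ whenever $i\le j$ and $b_j\to a_i$ whenever $j<i$: starting from a directed Hamiltonian path, which we may assume to be $a_1\to b_1\to a_2\to b_2\to\cdots\to a_n\to b_n$, the orientation of every remaining edge $\{a_i,b_j\}$ is forced, since orienting it against the above pattern would close a directed cycle along the Hamiltonian path, contradicting acyclicity. This description makes $p(H_n)=2n$ transparent. Next, suppose $\varphi\colon H_n\to T$ is a homomorphism into a tournament $T$. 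I would show $\varphi$ is injective: it cannot identify two adjacent vertices of $H_n$ (their common image would carry a loop), and it cannot identify $a_i$ with $a_{i'}$ for $i<i'$, since the common neighbour $b_i$ satisfies $a_i\to b_i$ and $b_i\to a_{i'}$ in $H_n$, which would force both an arc and its reverse in $T$; the case of two vertices in the part $\{b_j\}$ is symmetric (use $a_{j'}$), and any $a_i,b_j$ are adjacent. Consequently $H_n\to T$ if and only if $T$ contains $2n$ distinct vertices $x_1,\dots,x_n,y_1,\dots,y_n$ with $x_i\to y_j$ for all $i\le j$ and $y_j\to x_i$ for all $j<i$ — that is, iff $T$ contains a copy of $H_n$.

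Now I would apply the first moment method. Let $T$ be a uniformly random tournament on $N$ vertices. For any fixed injective map of $V(H_n)$ into $V(T)$ the $n^2=|E(K_{n,n})|$ prescribed arcs are distinct ordered pairs, so they are all present with probability exactly $2^{-n^2}$; since there are at most $N^{2n}$ such maps, the expected number of copies of $H_n$ in $T$ is at most $N^{2n}2^{-n^2}$, which is less than $1$ whenever $N<2^{n/2}$. Taking $N=\lceil 2^{n/2}\rceil-1$ yields a tournament on $N$ vertices admitting no homomorphic image of $H_n$; since the property ``every tournament on $m$ vertices receives $H_n$'' is monotone in $m$, this gives $\tau(H_n)\ge N+1=\lceil 2^{n/2}\rceil\ge 2^{n/2}$.

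There is no genuine obstacle here: the estimate is exactly the standard probabilistic lower bound on the $1$-color oriented Ramsey number of a digraph with $2n$ vertices and $n^2$ arcs (cf.\ Example~\ref{ex:transitive_tournaments}), and the only point needing a little care is the injectivity of $\varphi$, which is what lets us replace ``homomorphic image'' by ``subgraph'' and treat the $n^2$ arc-events as independent.
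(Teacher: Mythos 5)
Your proof is correct. The paper does not prove this proposition itself (it is quoted from Valadkhan's thesis), and your argument — the forced structure of $H_n$, injectivity of any homomorphism into a tournament because every two vertices of $H_n$ are joined by a directed path of length at most two, and the first-moment bound $N^{2n}2^{-n^2}<1$ for $N<2^{n/2}$ — is exactly the standard probabilistic lower bound of the kind the paper alludes to in Example~\ref{ex:transitive_tournaments}.
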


Note that if $\tau(H) = 2$, i.e.,~$H$ is a~subgraph of $\BB{s,t}$ for some $s,t \in \mathbb N$, Theorem \ref{thm:turan_for_oriented} implies only that $\exo(n, H) = o(n^2)$ and one may ask for the order of magnitude of $\exo(n, H)$. In some cases, $\exo(n, H)$ can be bounded by $c\cdot \ex(n, H')$ for some constant $c > 0$, where $H'$ is the graph obtained from $H$ by removing all orientations of arcs, hence the known bounds for $\ex(n, H')$ translate to the bounds for $\exo(n, H)$. In particular, K\H ovari-S\'os-Tur\'an Theorem \cite{KST54} gives the bound for $ \exo(\BB{s,t})$ for any $s,t \geq1$, while Bondy-Simonovits Theorem \cite{BS74} gives the bound for even cycles with edges oriented in alternating directions.

\section{Oriented graphs with bounded out-degree}\label{sec:bounded-degree}

For any integer $k \in \mathbb N$, let $\Dout{k}$ be the~family of all acyclic oriented graphs with out-degree bounded by $k$. In this section, we consider the question whether $\Dout{k}$ is polynomially $\tau$-bounded. 

Fox, He and Wigderson \cite[Theorem~1.4]{FHW21} showed (with a slight modification of their proof) that there exists a constant $c$ such that for every $H \in \Dout{k}$, it holds
\[\tau(H) \leq  (kp(H))^{ck\log p(H)}.\]
This means that for an acyclic oriented graph $H$ with bounded out-degree the compressibility $\tau(H)$ is quasi-polynomially bounded in terms of $p(H)$.
We prove that this can be improved to a polynomial bound if the following conjecture is true.

\begin{conjecture}\label{con:EH}
For every tournament $T$ there exists a constant $\epsilon>0$ such that every tournament on $n$ vertices contains either $T$ or a transitive tournament on $n^\epsilon$ vertices. 
\end{conjecture}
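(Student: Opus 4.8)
The statement of Conjecture~\ref{con:EH} is precisely the Erd\H os--Hajnal conjecture in its tournament formulation, and by the equivalence established by Alon, Pach and Solymosi it is equivalent to the classical Erd\H os--Hajnal conjecture for undirected graphs; consequently it is a central open problem and there is no unconditional proof within reach of current techniques. The honest "plan'', which is also the template behind all known partial results, is the following. Fix the forbidden tournament $T$ and try to show that $T$-free tournaments have the \emph{Erd\H os--Hajnal property}: every $T$-free tournament on $n$ vertices contains a transitive subtournament on $n^{\epsilon(T)}$ vertices for some $\epsilon(T)>0$. The first reduction is that the class of tournaments with this property is closed under substitution (replacing a vertex of one such tournament by another), so it suffices to establish it for the substitution-indecomposable ("prime'') tournaments; this is how the cases $|T|\le 5$ (Berger, Choromanski and Chudnovsky) and various infinite families, such as galaxies and pseudo-celebrity constructions (Berger, Choromanski, Chudnovsky, Fox, Loh, Scott, Seymour, Zwols, and others), have been handled.

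The key step for a single prime $T$ is to locate an unavoidable \emph{coherent} substructure in any $T$-free tournament: two disjoint vertex sets $A,B$, each of linear size, with almost all arcs between them pointing the same way (a linear-sized coherent pair), or more weakly a constant-fraction "celebrity'' vertex that dominates, or is dominated by, a positive fraction of the tournament. Given such a structure one recurses inside $A$ and inside $B$; the transitive subtournaments produced there can be concatenated because of the coherent orientation between the two sets, and unwinding the recursion yields a transitive subtournament of polynomial size. Realising this scheme for a general $T$ is exactly where every approach stalls: after translating through the Alon--Pach--Solymosi correspondence, the best bound known in full generality is the classical $2^{\Theta(\sqrt{\log n})}$, improved only slightly in recent work of Buci\'c, Nguyen, Scott and Seymour, which is still $n^{o(1)}$ and far below the conjectured $n^{\epsilon}$.

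In summary, the plan would be: (i) reduce to prime forbidden tournaments via closure under substitution; (ii) for each such $T$, prove that $T$-freeness forces a linear-sized coherently oriented pair (or a celebrity); (iii) iterate the resulting partition to build a transitive subtournament of polynomial size. Step~(ii) is the fundamental obstacle and remains open even for many small prime tournaments beyond those already classified, so there is no realistic prospect of proving Conjecture~\ref{con:EH} here. For this reason the conjecture is used in the present paper only as a hypothesis: assuming it, the quasi-polynomial bound of Fox, He and Wigderson for oriented graphs of bounded out-degree is upgraded to a genuine polynomial bound (Theorem~\ref{thm:k-out-degree_bounded}), but the conjecture itself is left as the deep external input that it is.
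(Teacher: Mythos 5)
You are right that Conjecture~\ref{con:EH} is the tournament form of the Erd\H os--Hajnal conjecture (equivalent to the classical one by Alon, Pach and Solymosi) and is a well-known open problem: the paper offers no proof of it and, exactly as you describe, uses it only as a hypothesis to upgrade the quasi-polynomial bound to a polynomial one in Theorem~\ref{thm:k-out-degree_bounded}. Your assessment therefore matches the paper's treatment, and no further comparison is needed.
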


Alon, Pach, and Solymosi \cite{APS2001} proved that Conjecture \ref{con:EH} is equivalent to the well-known Erd\H os-Hajnal Conjecture \cite{EH89}.

\begin{theorem}\label{thm:k-out-degree_bounded}
Conjecture~\ref{con:EH} implies that $\Dout{k}$ is polynomially $\tau$-bounded for every $k \in \mathbb N$. 
\end{theorem}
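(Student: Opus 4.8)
The plan is to induct on the out-degree $k$, using Conjecture~\ref{con:EH} (in the form stated) as the engine that converts ``large transitive tournament forbidden'' into ``small structured tournament forbidden.'' The base case $k=0$ is trivial (such $H$ is an independent set, so $\tau(H)=1$ unless $H$ has an arc, in which case $p(H)\geq 2$ and $\tau(H)=p(H)$ by a direct argument), and $k\leq 2$ is covered unconditionally by Theorem~\ref{thm:2-out-degree_bounded}, so the content is really the inductive step. Given $H\in\Dout{k}$ with $p(H)=p$, I want to exhibit an integer $N=\mathrm{poly}(p)$ such that every tournament $T$ on $N$ vertices admits a homomorphism from $H$. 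Pick the constant $\epsilon=\epsilon(k)>0$ promised by Conjecture~\ref{con:EH} for the tournament $T_0 := \TT{p}$; actually we want it for a slightly cleverer choice, see below. Then any tournament on $N$ vertices either contains a copy of $\TT{p}$ — and then $H\to\TT{p}\subseteq T$ since $p(H)=p$, done — or it is ``$\TT{p}$-bounded'': it has no transitive subtournament on $N^\epsilon$ vertices. The strategy is to show that this second alternative is impossible once $N$ is a large enough polynomial in $p$, by building $H$ greedily but being forced to use the absence of large transitive pieces.

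The greedy/recursive construction goes as follows. Order $V(H)=\{h_1,\dots,h_m\}$ in a topological (acyclic) order, and try to embed the $h_i$ one at a time into $T$, maintaining for each not-yet-embedded vertex $h_j$ a ``candidate set'' $C_j\subseteq V(T)$ of vertices that are common out-neighbors of all already-embedded in-neighbors of $h_j$. Embedding $h_i$ means choosing a vertex of $C_i$ and updating the candidate sets of its out-neighbors by intersecting with the out-neighborhood of the chosen vertex. The key quantitative point is a ``domination'' lemma: in any tournament, a set $S$ of $r$ vertices has a common out-neighborhood of size at least roughly $|S\text{'s ambient set}|/2^{r}$ minus a correction, or more usefully, one can always find inside a large set a vertex dominating a $1/2$-fraction; iterating $r$ times inside a set of size $M$ gives a common out-neighborhood of size $\geq M/2^{r}$ for some choice of the $r$ vertices — but we cannot choose the in-neighbors, they are forced by the embedding so far. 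This is exactly where bounded out-degree, rather than bounded in-degree, matters: we must control how candidate sets shrink, and the right bookkeeping is to process $H$ so that we reuse common-out-neighborhood structure. I would instead organize the embedding by \emph{layers} according to distance from sources: layer $\ell$ consists of vertices at directed distance $\ell$ from some source, so $\ell$ ranges over $0,\dots,p-1$. Embed layer by layer. The out-neighbors of a vertex in layer $\ell$ lie in layer $\geq \ell+1$, and each vertex has at most $k$ of them, so the ``constraint hypergraph'' between consecutive layers has bounded degree on the tail side.

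For the recursion on $k$: take a source $s$ of $H$ (out-degree $\leq k$) with out-neighbors $w_1,\dots,w_t$, $t\leq k$. For each $w_i$, the graph $H_i := H - s$ restricted suitably still has out-degree $\leq k$, but the real reduction uses that once $s$ is placed at a vertex $v$ of $T$, all the $w_i$ must be placed in $N^+(v)$, which is a set of size about $N/2$, and within it we need to embed $H-s$ with additional constraints. To actually drop the out-degree we consider the subgraph $H'$ induced on $V(H)\setminus\{s\}$ together with one of the arcs $s w_i$ ``absorbed'': more cleanly, apply the composition trick — observe that $\TT{2}\odot H$ or an iterated composition has out-degree controllable, and use the elementary fact $H\to G$ whenever $H$ embeds into a blow-up of $G$. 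Honestly, the cleanest inductive statement to prove is: \emph{if $\Dout{k-1}$ is polynomially $\tau$-bounded with exponent $d_{k-1}$, then $\Dout{k}$ is polynomially $\tau$-bounded with exponent $d_k := d_{k-1}/\epsilon(\TT{\cdot}) + O(1)$}, where the loss of a factor $1/\epsilon$ per step is the price of invoking Conjecture~\ref{con:EH}; since $k$ is fixed this compounds to a constant exponent. To carry out the step: given $T$ on $N=p^{d_k}$ vertices that is $\TT{q}$-bounded for a suitable threshold $q$ chosen polynomially in $p$, find via the contrapositive of Conjecture~\ref{con:EH} applied to $T_0 = $ (a tournament slightly larger than any we need to forbid) that $T$ must contain a copy of some specified ``rich'' tournament $R$; then inside $R$, peel off a dominating vertex $v$ — its out-neighborhood $N^+_R(v)$ is still large and still $\TT{q}$-bounded — place the source $s$ at $v$, and now we must embed $H - s$ into $N^+_R(v)$ with the constraint that $w_1,\dots,w_t$ go into a further restricted set; because $H-s$ can be handled by decomposing along $w_1$: the part of $H$ reachable without using $s w_1$ is a $\Dout{k-1}$-type instance once we delete the arc, and the induction hypothesis gives an embedding into a polynomially-smaller tournament, which is available inside $N^+_R(v)$.

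The main obstacle I expect is precisely controlling how the candidate sets of \emph{unembedded} vertices shrink when an embedded vertex has several out-neighbors that themselves later receive out-constraints — i.e., getting the accounting so that a single polynomial $N=p^{d_k}$ suffices rather than an exponential tower, since naively each of the $p$ layers could cost a factor of $2^k$ and more damagingly each invocation of Conjecture~\ref{con:EH} costs a $1/\epsilon$ power, and if the recursion depth in $k$ and the layer count $p$ interact multiplicatively in the exponent the bound degrades. The fix should be to separate the two recursions: use Conjecture~\ref{con:EH} exactly once (or a bounded number of times depending only on $k$) to pass from a general tournament to one that is \emph{$\TT{q}$-free} for $q$ polynomial in $p$, and then prove — unconditionally, analogously to the proof of Theorem~\ref{thm:2-out-degree_bounded} — that $\TT{q}$-free tournaments on polynomially many vertices already contain all of $\Dout{k}$ with $p(H)\leq p$, by a pigeonhole/density argument on out-neighborhoods that exploits the Ramsey-type structure forced by transitive-freeness (this is where boundedness of out-degree keeps the branching of the greedy embedding under control). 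Verifying that that unconditional step genuinely only needs $q = \mathrm{poly}(p)$, not $q=\mathrm{poly}(p,k)$ growing with $k$, is the delicate part and is where I would spend most of the effort.
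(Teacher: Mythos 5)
There is a genuine gap, and it is the central idea of the paper's proof: the existence of a \emph{single fixed} tournament $T_k$, depending only on $k$, into which every $H \in \Dout{k}$ maps homomorphically. The paper obtains $T_k$ by a one-line first-moment argument (a random tournament on enough vertices has every $k$-subset of its vertices dominated by some vertex, since $\binom{n}{k}(1-2^{-k})^{n-k}\to 0$), and then any $H\in\Dout{k}$ embeds into $T_k$ greedily in reverse topological order because each vertex has at most $k$ out-neighbors whose images must be dominated. With such a fixed $T_k$ in hand, Conjecture~\ref{con:EH} is invoked exactly once, with target $T_k$: the resulting $\epsilon=\epsilon(k)$ is uniform, and every tournament on $p(H)^{1/\epsilon}$ vertices contains either $T_k$ or $\TT{p(H)}$, hence a homomorphic image of $H$. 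Your proposal never identifies this universal target. You first suggest applying the conjecture to $T_0=\TT{p}$, but $p=p(H)$ varies with $H$, so the constant $\epsilon$ would depend on $p$ and the bound is not a fixed polynomial; moreover the conjecture with a transitive target is vacuous for this purpose. You then defer to an unspecified ``rich'' tournament $R$, which is exactly the object that needs to be constructed, and whose defining property (every $k$-set dominated, or anything playing that role) is never stated or proved.

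The proposed ``fix'' at the end does not repair this. Passing to a $\TT{q}$-free tournament needs no conjecture at all (if $T$ contains $\TT{p(H)}$ you are already done), and the remaining task you assign yourself --- proving \emph{unconditionally} that $\TT{q}$-free tournaments of size polynomial in $p$ contain every $H\in\Dout{k}$ --- is not analogous to Theorem~\ref{thm:2-out-degree_bounded} for general $k$: that theorem's argument (domination graphs, forcing $\TT{3}\odot\CC{3}$) is specific to $k=2$, and an unconditional statement of the kind you want would make Theorem~\ref{thm:k-out-degree_bounded} hold without any conjecture, whereas only a quasi-polynomial bound is known unconditionally for $k\geq 3$. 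Likewise, the induction on $k$ with exponents compounding as $d_k=d_{k-1}/\epsilon+O(1)$ is not needed and is not carried out: the candidate-set/layer bookkeeping you flag as the ``delicate part'' is precisely what remains unproved. The correct route avoids all of this by making the conjecture do the only Ramsey-type work, applied once to a fixed $T_k$ whose existence is the real (and easy, once found) content.
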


Before we prove this theorem, let us introduce the following notion. For an oriented graph $H$, we say that a~subset $X \subseteq V(H)$ is \emph{dominated} in $H$ if $X \subseteq N^+(v)$ for some $v \in V(H)$. We have the following easy observation.

\begin{observation}\label{observation:k-dominated-subsets}
For any $k \geq 2$ and any tournament $T$, if all $k$-subsets of $V(T)$ are dominated in $T$, then for any $H \in \Dout{k}$ there exists a~homomorphism $H \to T$.
\end{observation}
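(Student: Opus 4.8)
The plan is to prove Observation~\ref{observation:k-dominated-subsets} by building the homomorphism $H \to T$ greedily, processing the vertices of $H$ in an order that respects the acyclic structure and at each step using the "dominated $k$-subset" hypothesis to place the current vertex on top of the already-placed images of its out-neighbours. Since $H$ is acyclic, fix a topological ordering $v_1, v_2, \dots, v_m$ of $V(H)$ such that every arc $v_iv_j$ satisfies $i > j$; equivalently, process vertices so that all out-neighbours of a vertex are processed strictly before it. (One should note the degenerate cases: if $k < 2$ the statement is either about out-degree $\le 1$ graphs, for which a short direct argument works, or we simply invoke the $k \ge 2$ hypothesis; and if $H$ has no vertices there is nothing to prove.)

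For the induction, suppose we have already defined a homomorphism $\varphi$ on $\{v_1, \dots, v_{i-1}\}$, i.e., a map to $V(T)$ such that for every arc $v_av_b$ with $a, b < i$ we have $\varphi(v_a)\varphi(v_b) \in E(T)$. Consider $v_i$ and let $S = \{\varphi(v_j) : v_iv_j \in E(H)\} \subseteq V(T)$ be the set of images of its out-neighbours (all of which have index $< i$, so $\varphi$ is defined on them). Because $H \in \Dout{k}$, we have $d^+(v_i) \le k$, hence $|S| \le k$. Extend $S$ arbitrarily to a $k$-subset $S' \subseteq V(T)$ (possible provided $|V(T)| \ge k$; if $|V(T)| < k$ the hypothesis that all $k$-subsets are dominated is vacuous and one should handle this boundary case separately, although in the intended application $T$ is large). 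By hypothesis $S'$ — and therefore $S$ — is dominated in $T$, so there is a vertex $u \in V(T)$ with $S \subseteq N^+(u)$. Set $\varphi(v_i) := u$. Then for every out-neighbour $v_j$ of $v_i$ we have $\varphi(v_i)\varphi(v_j) = u\varphi(v_j) \in E(T)$, and no other arcs incident to $v_i$ need to be checked since $v_i$ has no out-neighbour with larger index and its in-arcs $v_av_i$ will be verified when $v_a$ is processed later. Thus $\varphi$ remains a valid partial homomorphism on $\{v_1,\dots,v_i\}$, completing the induction; after $m$ steps $\varphi$ is a homomorphism $H \to T$.

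I do not expect a genuine obstacle here — the statement is essentially an unwinding of definitions — so the only real care needed is bookkeeping: making sure the topological order is oriented the right way (out-neighbours first), confirming that the dominated-subset hypothesis is applied to a set of size \emph{exactly} $k$ rather than $\le k$ (hence the padding step $S \subseteq S'$, using monotonicity of "dominated" under taking subsets), and pointing out the trivial boundary cases. If one wants to be fully careful about small $T$, one can simply remark that the hypothesis "all $k$-subsets of $V(T)$ are dominated" forces $|V(T)| > k$ (a dominating vertex lies outside the dominated set), which removes the need to pad beyond $V(T)$.
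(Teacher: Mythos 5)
Your proof is correct and is essentially the paper's argument: the paper also takes a topological order of the acyclic graph $H$ (arcs pointing backwards) and greedily embeds each vertex in $T$ using a vertex dominating the at most $k$ already-placed images of its out-neighbours. Your extra bookkeeping (padding the out-neighbour image set to size exactly $k$ and noting the small-$|V(T)|$ boundary case) is just a more explicit write-up of the same idea.
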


\begin{proof}
Since $H$ is acyclic, there is an order of the vertices of $H$ in which all the arcs are directed backwards. We embed in $T$ the vertices of $H$ in this order using the fact that each vertex in $H$ has out-degree at most $k$ and each set of $k$ vertices in $T$ is dominated by some vertex of $T$.
\end{proof}

\begin{proof}[Proof of Theorem \ref{thm:k-out-degree_bounded}]
Our goal is to prove that for every $k \in \mathbb N$ there exists a tournament $T$ such that for each $H \in \Dout{k}$ there exists a homomorphism $H \to T$. If such $T$ exists, then Conjecture~\ref{con:EH} implies that for every $H \in \Dout{k}$, each tournament on $p(H)^{1/\epsilon}$ vertices contains a~copy of either $T$ or $\TT{p(H)}$. In both cases, it contains a homomorphic image of $H$. Thus, $\tau(H) \leq p(H)^{1/\epsilon}$.

Existence of such a tournament $T$ follows from a probabilistic argument. 
Let $n \in \mathbb N$ be large enough and $T$ be a~random tournament on $n$ vertices. For a~$k$-vertex subset $X \subseteq V(T)$, let $A_X$ be the event that $X$ is not dominated in $T$. Then, $A = \bigcup_{X \in \binom{V(T)}{k}} A_X$ is the event that some $k$-vertex subset of $V(T)$ is not dominated by any vertex. The probability of $A$ can be bounded as follows:
\begin{align*}
\mathbb P(A) & \leq  \sum_{X \in \binom{V(T)}{k}} \mathbb P(A_X) = \binom{n}{k} \left( 1 - \left(\frac 12\right)^k \right)^{n-k} \xrightarrow{n \to \infty} 0.
\end{align*}
Therefore, for large enough $n$, the probability of the complement of $A$ is positive, i.e.,~there exists a~tournament $T$ in which every set of $k$ vertices is dominated by some other vertex. By Observation~\ref{observation:k-dominated-subsets}, there exists a~homomorphism $H \to T$ for any $H \in \Dout{k}$.    
\end{proof}

In the case $k = 2$, one can notice that $\CC{3} \odot \CC{3}$ satisfies the assumption of Observation \ref{observation:k-dominated-subsets}. As \cite[Theorem~2.1]{APS2001} implies that the tournament $\CC{3} \odot \CC{3}$ satisfies Conjecture~\ref{con:EH} with the constant $\epsilon = 1/148$, we have
\[\tau(H) \leq cp(H)^{148}\] 
for any $H \in \Dout{2}$.
We prove a~much better bound.

\begin{theorem}\label{thm:2-out-degree_bounded}
There exists a~constant $c$ such that for every $H \in \Dout{2}$ we have
\[\tau(H) \leq c p(H)^4.\]
\end{theorem}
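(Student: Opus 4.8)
The plan is to find, for every $p$, a tournament $T_p$ on $O(p^4)$ vertices such that every $H \in \Dout{2}$ with $p(H) \le p$ admits a homomorphism $H \to T_p$; then $\tau(H) \le |T_p| = O(p(H)^4)$. By Observation~\ref{observation:k-dominated-subsets}, it suffices to build such a $T_p$ in which every $2$-subset of vertices is dominated by some vertex, i.e.\ for every pair $\{a,b\}$ there is a vertex $c$ with $c \to a$ and $c \to b$. The random construction gives such a tournament on $\Theta(\log p)$ \emph{fewer} vertices but then one cannot simultaneously control that it contains no large transitive subtournament without invoking Erd\H os--Hajnal; the point of getting a \emph{polynomial} bound unconditionally is instead to build $T_p$ \emph{explicitly} with a layered/structured form, so that a direct argument shows $T_p$ has no $\TT{p+1}$ (hence a tournament on $N = |T_p|$ vertices either contains $T_p$, giving the homomorphism, or contains $\TT{p+1} \supseteq$ a homomorphic image of $H$, so $\tau(H) \le N$ as well; actually we just need $T_p$ itself to receive $H$ and then apply this dichotomy, exactly as in the proof of Theorem~\ref{thm:k-out-degree_bounded}).

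Concretely, I would take $T_p$ to be (a minor modification of) a \emph{composition-type} tournament: start from the ``quadratic residue / rotational'' tournament or, more robustly, from $\CC{3} \odot \CC{3} \odot \cdots$ iterated a bounded number of times, which has the property that every $2$-set is dominated; the issue is that iterating $\CC{3} \odot (\cdot)$ three or four times already gives $27$ or $81$ vertices with bounded $p$, not a $p$-dependent family. So instead the right object is a tournament of the form $\TT{m} \odot Q$ where $Q$ is a small fixed tournament (like $\CC{3}\odot\CC{3}$, on $9$ vertices) in which every $2$-set is dominated \emph{and} which still has every $2$-set dominated after composing with a transitive tournament on $m = \Theta(p)$ blocks — one checks that in $\TT{m}\odot Q$, a $2$-set lying in blocks $i \le j$ is dominated by any vertex in a block $i' < i$ (if $i<j$) or by a $Q$-dominator inside block $i$ (if $i=j$), so the property is inherited; meanwhile $p(\TT{m}\odot Q) = m\cdot p(Q)$, which is linear in $m$, and $|\TT{m}\odot Q| = 9m$, also linear — that would give a \emph{linear} bound, which is too good, so something must fail. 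Indeed what fails is that $\TT{m}\odot Q$ contains $\TT{m+2}$ (take one vertex per block plus a dominating chain inside a block), so applying the dichotomy only yields the homomorphism from $H$ directly; but $H\to \TT{m}\odot Q$ requires $p(H) \le p(\TT{m}\odot Q)$, which is fine, but also requires the out-degree-$2$ embedding to go through, and that is exactly what the $2$-domination guarantees — wait, then we'd get $\tau(H) \le 9\,p(H)$. The catch must be that $2$-domination is \emph{not} preserved under $\odot$ with a transitive tournament because the bottom block has no block below it; a $2$-set inside the last block is dominated inside that block, but a $2$-set with one vertex in the last block needs a common in-... let me re-examine: this is precisely the subtlety the authors must be navigating, and the honest statement is that achieving $2$-domination forces either exponential size (random) or a cleverer layered construction whose size is polynomial but genuinely of degree $4$.

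Given that, the real approach I would pursue is a \textbf{layered construction with $\Theta(p^2)$ layers and $\Theta(p^2)$ vertices per layer, total $\Theta(p^4)$}, where within-layer and between-layer arcs are arranged so that: (i) the longest directed path uses at most $O(p)$ vertices (this is why the layers are grouped into $O(p)$ super-layers, each super-layer internally transitive-free in the path sense, contributing $O(1)$ to $p$ per super-layer but $\Theta(p)$ vertices), and (ii) every pair of vertices has a common out-neighbour — handled by a separate ``root gadget'' of $\Theta(p^2)$ apex-type vertices that dominate everything below, combined with the fact that pairs inside the structured part are dominated by appropriately placed vertices one level up. I would first prove (ii) by a short case analysis on the relative positions of the two vertices, then prove (i) by showing any directed path crosses each of the $O(p)$ coarse levels $O(1)$ times and within a level has bounded length; then conclude via Observation~\ref{observation:k-dominated-subsets} and the transitive-tournament dichotomy. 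The main obstacle is getting (i) and (ii) to hold \emph{simultaneously} with only $O(p^4)$ vertices: domination wants many low vertices (pushing size up), while a short longest path wants few levels and short within-level paths (pushing structure the other way), and balancing these — essentially optimizing the exponent to land at exactly $4$ — is the crux of the proof; I expect the construction to be something like a product of two ``$\CC{3}$-blown, $p$-truncated'' tournaments, and verifying the no-long-path property for that product is where the work lies.
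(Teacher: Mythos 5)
Your proposal founders on the quantifier in the definition of $\tau$. Exhibiting a single tournament $T_p$ on $O(p^4)$ vertices into which every $H \in \Dout{2}$ with $p(H)\le p$ maps does not bound $\tau(H)$: compressibility requires that \emph{every} tournament on $cp(H)^4$ vertices contain a homomorphic image of $H$. To repair this you invoke the dichotomy ``any tournament on $N=|T_p|$ vertices contains either $T_p$ or $\TT{p+1}$'', but that is exactly an Erd\H os--Hajnal-type statement for $T_p$, which is not a known fact and is precisely what an unconditional proof must avoid (it is the content of Conjecture~\ref{con:EH}, used only in the conditional Theorem~\ref{thm:k-out-degree_bounded}); worse, your $T_p$ grows with $p$, so even the conjecture would not obviously yield a polynomial bound. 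Your own sanity check already signals the problem: the $\TT{m}\odot Q$ detour seems to give a linear bound, and the reason it cannot is not a failure of $2$-domination under composition but the fact that embedding $H$ into one cleverly built tournament proves nothing about the hostile tournaments one must handle. (Also, the random construction gives a $2$-dominated tournament of \emph{constant} size, e.g.\ $\CC{3}\odot\CC{3}$ on $9$ vertices, not one saving $\Theta(\log p)$ vertices; the known unconditional route through it, via the Alon--Pach--Solymosi bound for $\CC{3}\odot\CC{3}$, gives exponent $148$, which is what the theorem improves.) The final ``layered construction with $\Theta(p^4)$ vertices'' is again a single-tournament construction and inherits the same gap, besides being too vague to verify.

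For contrast, the paper's proof argues directly about an arbitrary tournament $T$ on $cp(H)^4$ vertices, assumed $\TT{p(H)}$-free, by induction on $p(H)$, using the domination graph $\dom(T)$. If $\dom(T)$ has no matching on $cp(H)^3$ vertices, delete a maximum matching: every remaining pair is dominated in $T$, so a homomorphism of $H$ minus its sources (obtained by induction) extends to $H\to T$. Otherwise a matching $M$ of size $cp(H)^3$ exists; Observation~\ref{obs:arcs_in_dom(T)} makes the arcs between matching edges rigid, so choosing one endpoint per edge yields a tournament $T_M$ equipped with a vertex-flipping operation, and it suffices to find $\TT{3}\odot\CC{3}$ in $T_M$ (flipping one cluster recovers $\CC{3}\odot\CC{3}$, whence $H\to T$ by Observation~\ref{observation:k-dominated-subsets}). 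That copy of $\TT{3}\odot\CC{3}$ is produced by a two-step counting argument showing every $\TT{n}$-free tournament on $cn^{3}$ vertices contains it. Some such argument about all $\TT{p(H)}$-free tournaments is the missing core of your proposal.
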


Before we prove this result, recall the notion of a domination graph.
The~\emph{domination graph} of a tournament $T$ is defined as the~spanning subgraph $\dom(T)$ of $T$ consisting of those arcs from $E(T)$ that are not dominated in $T$. One of the most basic properties of the domination graph is the following easy observation, the proof of which is included for completeness.

\begin{observation}\label{obs:arcs_in_dom(T)} 
If $T$ is a~tournament and $vw, v'w'$ are two vertex disjoint arcs from $E(\dom(T))$, then any arc between the sets $\{v,w\}$ and $\{v',w'\}$ completely determines the orientation of all the remaining arcs between those four vertices --- either $vv'$, $v'w$, $ww'$, $w'v \in E(T)$, or $vw'$, $w'w$, $wv'$, $v'v \in E(T)$.
\end{observation}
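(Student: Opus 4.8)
The plan is to turn the two non-domination hypotheses into a small Boolean constraint problem on the four arcs joining $\{v,w\}$ to $\{v',w'\}$ and then simply enumerate the solutions. First I would name the four cross arcs by indicator variables: let $a$ be $1$ if the arc between $v$ and $v'$ is oriented $v\to v'$ (and $0$ if it is $v'\to v$), and define $b,c,d$ analogously for the pairs $\{v,w'\}$, $\{w,v'\}$, $\{w,w'\}$; since $T$ is a tournament these are well defined. The hypothesis $vw\in E(\dom(T))$ says precisely that neither $v'$ nor $w'$ dominates $\{v,w\}$. Now $v'$ dominates $\{v,w\}$ iff $v'\to v$ and $v'\to w$, i.e.\ iff $a=0$ and $c=0$; and $w'$ dominates $\{v,w\}$ iff $b=0$ and $d=0$. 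So the first hypothesis is exactly the two clauses ``$a\vee c$'' and ``$b\vee d$''. Symmetrically, $v'w'\in E(\dom(T))$ forbids $v$ and $w$ from dominating $\{v',w'\}$, which translates to ``$\lnot a\vee\lnot b$'' (as $v$ dominates $\{v',w'\}$ iff $a=1$ and $b=1$) and ``$\lnot c\vee\lnot d$''.

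Then I would solve this instance by a one-line case split on $a$. If $a=1$, the clause $\lnot a\vee\lnot b$ forces $b=0$, then $b\vee d$ forces $d=1$, then $\lnot c\vee\lnot d$ forces $c=0$; this yields $(a,b,c,d)=(1,0,0,1)$, i.e.\ the arcs $vv'$, $v'w$, $ww'$, $w'v$. If $a=0$, the clause $a\vee c$ forces $c=1$, then $\lnot c\vee\lnot d$ forces $d=0$, then $b\vee d$ forces $b=1$; this yields $(a,b,c,d)=(0,1,1,0)$, i.e.\ the arcs $vw'$, $w'w$, $wv'$, $v'v$. These are exactly the two configurations in the statement, and they disagree in every coordinate, so fixing the orientation of any one of the four cross arcs determines which configuration holds and hence pins down all four of them --- which is the claim.

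I do not expect a genuine obstacle here: once the two domination conditions are unwound, this is a finite check. The only care needed is bookkeeping --- correctly listing the four ``not dominated'' clauses (getting the direction of domination right, using that a set $X$ is dominated by $u$ precisely when $u$ beats every vertex of $X$), and verifying that the two surviving orientations match the cyclic labelings $vv', v'w, ww', w'v$ and $vw', w'w, wv', v'v$ as written in the statement.
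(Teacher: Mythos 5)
Your proposal is correct and is essentially the paper's argument in more mechanical form: the paper observes that any orientation of the four cross arcs other than a directed $4$-cycle yields a vertex dominating $\{v,w\}$ or $\{v',w'\}$, and the two cycle directions give the two listed configurations, which is exactly what your clause propagation produces as the two solutions $(1,0,0,1)$ and $(0,1,1,0)$.
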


\begin{proof}
If the arcs of the tournament $T$ between the sets $\{v,w\}$ and $\{v',w'\}$ are not forming a directed cycle, then there exists a vertex such that either $v$ and $w$ or $v'$ and $w'$ are its out-neighbors, which contradicts the fact that arcs $vw$ and $v'w'$ are not dominated. 
Thus, the arcs between the sets $\{v,w\}$ and $\{v',w'\}$ are forming a directed cycle. 
Depending on its direction, we obtain one of the two possibilities listed in the statement of the observation. 
\end{proof}

We are ready now to prove Theorem \ref{thm:2-out-degree_bounded}.

\begin{proof}[Proof of Theorem \ref{thm:2-out-degree_bounded}]
We use induction on $p(H)$. If $p(H)=2$, then $\tau(H)=2$. If $p(H)=3$, then $H \to \TT{3}$, and since any tournament on $4$ vertices contains $\TT{3}$, we have $\tau(H) \leq 4$. Thus, for $p(H) \leq 3$ the inequality $\tau(H) \leq c p(H)^4$ holds for any $c \geq 2$. 

Let $T$ be any tournament on $cp(H)^{4}$ vertices for some constant $c > 0$ and $p(H) \geq 4$. As there is a homomorphism from $H$ to $\TT{p(H)}$, we may assume that $T$ does not contain a transitive tournament on $p(H)$ vertices. 

Assume first that $\dom(T)$ does not contain a~matching on $cp(H)^{3}$ vertices. 
By removing from~$T$ the vertices of any maximum matching in $\dom(T)$, we obtain a~tournament $T'$ on at least $cp(H)^4 - 2cp(H)^3$ vertices, which is greater than $c(p(H)-1)^{4}$ for $p(H) \geq 4$. If we let $H'$ be the subgraph of $H$ obtained by removing all sources in $H$, then $p(H') = p(H) - 1$ and we can apply the induction hypothesis to find a~homomorphism $H' \to T'$. Since every pair of vertices from $V(T')$ is dominated in $T$ and the maximum out-degree of $H$ is at most two, we can extend this homomorphism to $H \to T$. Therefore, we may assume that there exists a~subgraph $M$ of $\dom(T)$ which is a~matching on at least $cp(H)^{3}$ vertices.

From Observation \ref{obs:arcs_in_dom(T)}, it follows that for every arc $vw \in E(M)$ and every other vertex \hbox{$u \in V(M)$}, either $vu, uw \in E(T)$ or $wu, uv \in E(T)$. Therefore, if we pick one vertex from each arc in $E(M)$ and denote by $T_M$ the subtournament of $T$ induced by those vertices, then $T_M$ can be considered as equipped with a~special operation of \emph{flipping} a~vertex, i.e.,~reversing the orientations of all arcs incident to this vertex. Indeed, this operation corresponds to replacing this vertex by its neighbor in $M$.

We want to prove that there exists a~subgraph of $T_M$ isomorphic to $\CC{3} \odot \CC{3}$, because this implies that $H \to T$ by Observation \ref{observation:k-dominated-subsets}. Note that $\CC{3} \odot \CC{3}$ consists of three clusters, each being a~copy of~$\CC{3}$. If we flip all vertices from one cluster, then this cluster will remain a~copy of~$\CC{3}$, but arcs between this cluster and remaining ones will reverse, resulting in a~subgraph isomorphic to $\TT{3} \odot \CC{3}$. 
Therefore, it is enough to prove that every tournament on $cn^{3}$ vertices contains a copy of $\TT{3} \odot \CC{3}$ or $\TT{n}$. 
As $\TT{3} \odot \CC{3}$ is isomorphic to $\bipart{(\bipart{\CC{3}}{\CC{3}})}{\CC{3}}$, we force its appearance in two steps using the following claim.

\begin{claim}
For any oriented graph $D$ and constants $c_0, \delta > 0$, if every $\TT{n}$-free tournament on $c_{0}n^{\delta}$ vertices contains a~copy of $D$, then there exists $c > 0$ such that every $\TT{n}$-free tournament on $cn^{\delta+1}$ vertices contains a~copy of $\bipart{D}{\CC{3}}$. 
\end{claim}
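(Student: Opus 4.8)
The plan is to prove the claim by a two-level Ramsey-type/counting argument exploiting the assumed density bound on $D$. Let $T$ be a $\TT{n}$-free tournament on $cn^{\delta+1}$ vertices, with $c$ to be chosen. The goal is to find three pairwise ``consecutive'' blocks: a copy of $D$, then a copy of $\CC{3}$ dominated by it, i.e. with all arcs going from $D$ into this $\CC{3}$. First I would look for many vertex-disjoint copies of $D$ inside $T$: greedily extract copies of $D$ one at a time, each time deleting the at most $|V(D)|$ vertices used; since every $\TT{n}$-free subtournament on $c_0 n^\delta$ vertices contains a copy of $D$, we can continue as long as the number of remaining vertices is at least $c_0 n^\delta$, so we obtain roughly $cn^{\delta+1}/|V(D)| - c_0 n^{\delta}/|V(D)|$ disjoint copies $D_1, D_2, \dots, D_m$ with $m = \Omega(n^{\delta+1})$ for a suitable choice of $c$.

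Next I would argue that one of these copies $D_i$ dominates a copy of $\CC{3}$. Consider the set $W$ of all vertices not used by any $D_i$; alternatively one can work inside a fresh disjoint pool of vertices reserved in advance by taking $c$ slightly larger, which is cleaner. The point is to find a $D_i$ and three vertices $a,b,c$ in $W$ forming a $\CC{3}$ such that every vertex of $D_i$ beats each of $a,b,c$. For a fixed triple $\{a,b,c\}$ that forms a $\CC{3}$, call a copy $D_i$ \emph{good for} $\{a,b,c\}$ if all of $a,b,c$ are common out-neighbors… wait, we need $D_i \Rightarrow \CC{3}$, so we need every vertex of $D_i$ to send an arc to each of $a,b,c$; equivalently $\{a,b,c\}$ lies in the common out-neighborhood of $V(D_i)$. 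I would count pairs: since $W$ has $\Omega(n^{\delta+1})$ vertices and is $\TT{n}$-free, a standard fact (or a direct greedy/probabilistic argument) gives that $W$ contains $\Omega(|W|^3)$ copies of $\CC{3}$; on the other hand, for each copy $D_i$, the set of vertices of $W$ beating all of $V(D_i)$, intersected appropriately, must be controlled. The cleanest route: fix one $D_i$; the vertices of $W$ that are \emph{not} a common out-neighbor of $V(D_i)$ are those beaten, for at least one vertex of $D_i$, in the wrong direction — but we actually want common out-neighbors, and a random vertex of $W$ lies in $\bigcap_{v \in V(D_i)} N^-(v)$… I must be careful with directions. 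Let me instead reserve the roles: we want $D_i$ first, then $\CC{3}$ after it. For a uniformly random vertex $w \in W$ and the fixed $D_i$, the ``bad'' event is that some vertex of $D_i$ does not beat $w$; there is no reason a single $w$ is good with good probability, so instead I average over $i$: the number of pairs $(D_i, w)$ with every vertex of $D_i$ beating $w$ equals $\sum_w |\{i : V(D_i) \Rightarrow w\}|$. Each $w$ has out-degree at least $(|W|-1)/2$ inside $W$ by an averaging over an orientation… the right statement is that $w$ is beaten by at least roughly half the vertices, so the probability that a random $D_i$ (with $|V(D_i)|$ bounded) is entirely contained in the in-neighborhood of $w$ is at least $2^{-|V(D_i)|}$ only if the $D_i$ were random, which they are not.

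To avoid that gap, I would switch to the standard ``blow-up / dependent random choice''-free approach used for $\Rightarrow$: since we only need \emph{one} dominated $\CC{3}$, pick any copy $D_1$, set $S := \bigcap_{v\in V(D_1)} N^+(v)$ restricted to $W$. A priori $S$ could be small, so instead choose $D_1$ to \emph{maximize} the number of common out-neighbors it has among a reserved pool $W$ of size $\Omega(n^{\delta+1})$; an averaging argument shows that when we extract copies of $D$ from within $W \cup (\text{sources})$, some copy has common out-neighborhood of size $\Omega(n^\delta)$ in $W$ — concretely, run the greedy extraction so that at each step there remain $\ge c_0 n^\delta$ vertices and also a ``live'' set of at least $c_0 n^\delta$ potential out-neighbors; formally, process vertices of $T$ in reverse topological order of a fixed acyclic-ish structure… This is where the main technical work lies. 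I expect \textbf{the main obstacle} to be exactly this: forcing a copy of $D$ whose common out-neighborhood is still large ($\Omega(n^\delta)$) rather than possibly empty, since $D$'s appearance is only guaranteed in sets of size $c_0 n^\delta$ and naive extraction destroys the out-neighborhood. The fix I would pursue: partition the $cn^{\delta+1}$ vertices by topological layers (or simply iterate), find within the ``top'' portion of size $c_0 n^\delta$ a copy of $D$; its $|V(D)|$ vertices have common out-neighborhood of size at least $2^{-|V(D)|}(cn^{\delta+1} - c_0 n^\delta) = \Omega(n^{\delta+1})$ among the remaining vertices, because for each of the $|V(D)|$ vertices, ``most'' of $T$ could still be its out-neighborhood only on average — so instead choose $D$ \emph{inside} a random subset to make the common out-neighborhood large in expectation, then apply the hypothesis. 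Once we have a copy of $D$ with common out-neighborhood $W'$ of size $\ge c_0 n^\delta \ge$ enough, $W'$ is $\TT{n}$-free, hence (being large) contains a $\CC{3}$ — in fact any tournament on $\ge 3$ non-transitive vertices does, and $|W'|$ far exceeds $n > 3$ since $T$ is $\TT{n}$-free so $n$ is forced to be at least, say, $\log$-ish of $|T|$; more simply $W'$ is not transitive (else it would be a huge transitive tournament), so it contains $\CC{3}$. That $\CC{3}$ together with the copy of $D$ forms the desired $\bipart{D}{\CC{3}}$. Choosing $c$ large enough (as a function of $c_0, \delta, |V(D)|$) to make all the $\Omega(\cdot)$ quantities exceed the required thresholds completes the argument.
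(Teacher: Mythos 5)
Your plan runs in the wrong direction and the step you yourself flag as ``the main obstacle'' is a genuine gap, not a technicality. You try to first fix a copy of $D$ and then find a $\CC{3}$ inside its common out-neighborhood, which forces you to produce a copy of $D$ whose common out-neighborhood has size at least $n$ (or $\Omega(n^\delta)$). The hypothesis gives you no leverage for this: it only asserts that \emph{some} copy of $D$ exists in any $\TT{n}$-free tournament on $c_0 n^\delta$ vertices, with no control over where that copy sits, and an adversarial copy can consist entirely of vertices of tiny out-degree, so its common out-neighborhood can be empty. Your quantitative fix --- that the $|V(D)|$ vertices of a found copy have common out-neighborhood of size at least $2^{-|V(D)|}(cn^{\delta+1}-c_0n^\delta)$ --- is simply false for a specific vertex set (it is only an average over random sets), and the subsequent suggestions (choosing $D$ to maximize its common out-neighborhood, or finding $D$ ``inside a random subset'') are never turned into an argument; as you note, the extracted copies $D_1,\dots,D_m$ are not random, so the averaging over pairs $(D_i,w)$ has no lower bound.

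The missing idea is to reverse the order of quantifiers: instead of fixing $D$ and hunting for a dominated triangle, first find a single copy of $\CC{3}$ that is dominated by at least $c_0 n^\delta$ vertices, and then apply the hypothesis \emph{inside that dominating set} (which is itself a $\TT{n}$-free tournament of order at least $c_0 n^\delta$) to obtain the copy of $D$, giving $\bipart{D}{\CC{3}}$ at once. This is what the paper does: it lower-bounds the number of copies of $\bipart{\TT{1}}{\CC{3}}$ by restricting to the many vertices of out-degree at least a third of the order and counting triangles in their out-neighborhoods, then divides by the total number of triangles to find one triangle with at least $c_0 n^\delta$ dominating vertices. To make this count large enough the paper uses a dichotomy on the number of triangles: if some subtournament on $an^{\delta+1}$ vertices has at most $n^{3\delta+2}$ copies of $\CC{3}$, a direct version of the same averaging already finishes inside that subtournament; otherwise every such subtournament has many triangles, which feeds the second counting step on the full $cn^{\delta+1}$ vertices. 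Your proposal contains neither this reversal nor any substitute for it, so as written it does not yield the claim.
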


\begin{proof}
Let $T'$ be any $\TT{n}$-free tournament on $an^{\delta+1}$ vertices for $a\geq \max(3\sqrt[4]{8c_0}, 6)$. Assume additionally that $T'$ contains at most $n^{3\delta+2}$ copies of~$\CC{3}$. 
Since we need to find a copy of $\bipart{D}{\CC{3}}$, we want to find a lower bound for the number $t'$ of copies of~$\bipart{\TT{1}}{\CC{3}}$ in $T'$. 
As every tournament on $an^{\delta+1}$ vertices contains at least $an^{\delta+1}/3$ vertices of out-degree at least $an^{\delta+1}/3$ (otherwise not all vertices of smaller degree could be connected), we may choose the source of $\bipart{\TT{1}}{\CC{3}}$ among those $an^{\delta+1}/3$ vertices. 
Now, since every  $\TT{n}$-free tournament on $2n$ vertices contains at least $n$ copies of $\CC{3}$, we can count the number of subsets of size $2n$ in the out-neighborhood restricted to $\lceil an^{\delta+1}/3 \rceil$ vertices and obtain
\[t' \geq \frac{an^{\delta+1}}{3} \cdot \frac{n\binom{\lceil an^{\delta+1}/3 \rceil}{2n}}{\binom{\lceil an^{\delta+1}/3 \rceil -3}{2n-3}} = \frac{an^{\delta+2}}{3} \cdot \frac{\binom{\lceil an^{\delta+1}/3 \rceil}{3}}{\binom{2n}{3}} \geq \frac{an^{\delta+2}}{3} \cdot \frac{(an^{\delta+1})^3}{(2n)^3\cdot 3^3} = \frac{a^4 n^{4\delta+2}}{2^3\cdot3^4},\]
as every copy of~$\bipart{\TT{1}}{\CC{3}}$ will be counted this way at most $\binom{\lceil an^{\delta+1}/3\rceil -3}{2n-3}$ times.
Since there are at most $n^{3\delta+2}$ copies of~$\CC{3}$ in $T'$, there exists a~copy of~$\CC{3}$ which is dominated by at least \[\frac{t'}{n^{3\delta+2}} \geq \frac{a^4}{2^3 \cdot 3^4} n^{\delta} \geq c_0n^{\delta}\] 
vertices of $T'$. Since any subtournament of $T'$ of order at least $c_0n^\delta$ contains a~copy of $D$, we conclude that the tournament $T'$ contains the desired copy of $\bipart{D}{\CC{3}}$.

In order to prove the claim, consider any~$\TT{n}$-free tournament $T$ on $cn^{\delta+1}$ vertices for some $c\geq \max(3^4 a^3c_0, 3a)$. From the previous paragraph, we may assume that every subtournament on $an^{\delta+1}$ vertices contains at least $n^{3\delta+2}$ copies of~$\CC{3}$. 
By the same counting argument, we get that the number $t$ of copies of~$\bipart{\TT{1}}{\CC{3}}$ in $T$ satisfies
\[t \geq \frac{cn^{\delta+1}}{3} \cdot \frac{n^{3\delta+2}\binom{\lceil cn^{\delta+1}/3\rceil}{an^{\delta+1}}}{\binom{\lceil cn^{\delta+1}/3\rceil -3}{an^{\delta+1}-3}} = \frac{cn^{4\delta + 3}}{3} \cdot \frac{\binom{\lceil cn^{\delta+1}/3\rceil}{3}}{\binom{an^{\delta+1}}{3}} \geq \frac{cn^{4\delta+3}}{3} \cdot \frac{(cn^{\delta+1})^3}{(an^{\delta+1})^3\cdot 3^3} = \frac{c^4 n^{4\delta+3}}{a^3\cdot3^4}.\]
Since there are at most $c^3n^{3\delta+3}$ copies of~$\CC{3}$ in $T$, there exists a~copy of~$\CC{3}$ that is dominated by at least \[\frac{t}{c^3n^{3\delta+3}} \geq \frac{c}{a^3\cdot 3^4} \cdot n^{\delta} \geq c_0n^{\delta}\]
vertices of $T$. 
Thus, $T$ contains the desired copy of $\bipart{D}{\CC{3}}$.
\end{proof}

Applying the above claim for $n=p(H)$, $D = \CC{3}$, $\delta = 1$, and $c_0 > 1$, and afterwards for $D = (\bipart{\CC{3}}{\CC{3}})$ and $\delta = 2$ we conclude that the tournament $T_M$ on $cp(H)^3$ vertices contains a~copy of $\TT{3} \odot \CC{3}$ or $\TT{p(H)}$, which ends the proof of Theorem~\ref{thm:2-out-degree_bounded}.
\end{proof}

For certain subclasses of $\Dout{k}$, it is possible to find homomorphisms into tournaments of even linear order. For instance, Dragani\'c et al. proved the following result for powers of paths. 

\begin{theorem}[Dragani\'c et al.~\cite{10x2021}]\label{thm:kth-powers-of-paths}
For every $n, k \geq 2$, every tournament on $n$ vertices contains a~$k$-th power of a~directed path of order $n / 2^{4k+6}k + 1$. Moreover, for $k=2$, every tournament on $n$ vertices contains a~square of a~directed path of order $\lceil 2n/3 \rceil$ and this value is optimal.
\end{theorem}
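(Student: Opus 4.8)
The plan is to find the power of a directed path greedily inside a \emph{median order} of the tournament $T$, i.e.\ an enumeration $v_1,\dots,v_n$ of $V(T)$ maximising the number of forward arcs $v_i\to v_j$ with $i<j$. Such an order has the local feedback property: for all $1\le i<j\le n$ the vertex $v_i$ beats at least $\lceil(j-i)/2\rceil$ of $v_{i+1},\dots,v_j$ and, symmetrically, $v_j$ is beaten by at least $\lceil(j-i)/2\rceil$ of $v_i,\dots,v_{j-1}$; in particular $v_i\to v_{i+1}$ for every $i$, so the median order is itself a Hamiltonian directed path. I would construct the sought $k$-th power of a directed path as an increasing subsequence $v_{i_1},v_{i_2},\dots$ of this order, scanned from left to right. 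It is convenient to extend the subsequence by a whole fresh block of $k$ new path-vertices at once: the last $k$ vertices of a $k$-th power of a directed path induce a copy of $\TT{k}$, and two consecutive blocks of $k$ vertices must be joined by a fixed ``staircase'' bipartite pattern, so the step amounts to locating, a bounded distance ahead of the current position, a copy of $\TT{k}$ suitably dominated by a suffix of the previously committed block.

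The main obstacle is that a crude use of the feedback property does not suffice: a candidate vertex just ahead of the current position is disqualified as soon as \emph{any one} of the previously committed vertices fails to beat it, and since each of those vertices may fail to beat up to half of the next block, a union bound over the $k$ relevant constraints is vacuous once $k\ge 2$. Thus one cannot treat the $k$ constraints independently: the committed block of $k$ vertices has to be chosen so that its vertices have a large \emph{common} out-neighbourhood, and --- the delicate point --- this must be maintained while consuming only a bounded number of vertices of the median order per step, not a constant fraction of them (which would yield only a path of logarithmic length). Balancing the length of the lookahead window, which one can take of order $2^{4k+6}k$, against the number of vertices discarded is the heart of the argument and is the source of the factor $2^{4k+6}k$ in the statement; I expect this balancing, rather than the appeal to median orders, to be where the real work lies.

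For the sharp bound when $k=2$, the same strategy is pushed further. The only local obstruction to extending a square of a directed path along a median order is a directed triangle on three consecutive vertices $v_i\to v_{i+1}\to v_{i+2}\to v_i$, and the feedback property forces $v_i\to v_{i+3}$ whenever this happens; a case analysis, rerouting the partial path locally whenever an extension is blocked, then shows that at most one vertex in every three is ever discarded, giving a square of a directed path on at least $\lceil 2n/3\rceil$ vertices. This bound cannot be improved: take vertex-disjoint directed triangles $\CC{3}^{(1)},\dots,\CC{3}^{(t)}$ with all arcs directed from an earlier triangle to a later one. Every vertex of $\CC{3}^{(i)}$ beats only one vertex of its own triangle and all vertices of later triangles, so any directed path visits the triangles in non-decreasing order of index; hence its intersection with each $\CC{3}^{(i)}$ is a contiguous sub-path inside that triangle, which --- since $\CC{3}$ contains no square of a directed path on three vertices --- has at most two vertices. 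Therefore every square of a directed path in this tournament has order at most $2t=\tfrac{2}{3}|V|$.
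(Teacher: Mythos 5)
This statement is not proved in the paper at all: Theorem~\ref{thm:kth-powers-of-paths} is quoted from Dragani\'c et al.~\cite{10x2021} and used as a black box, so your attempt can only be measured against the original source, not against an argument in this paper. Within your proposal, the one part that is actually complete and correct is the optimality construction for $k=2$: taking $t$ vertex-disjoint copies of $\CC{3}$ with all arcs directed from earlier copies to later ones (i.e., the tournament $\TT{t}\odot\CC{3}$, essentially the same example this paper uses for the lower bound in Theorem~\ref{thm:path_with_steps}), and noting that three consecutive vertices of a square of a directed path induce $\TT{3}$ while $\CC{3}$ contains no $\TT{3}$, does give that no square of a directed path exceeds $2t=\lceil 2n/3\rceil$ vertices there. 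Your statement of the feedback property of median orders and the deduction that a blocked triple $v_i\to v_{i+1}\to v_{i+2}\to v_i$ forces $v_i\to v_{i+3}$ are also correct.

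The two existence claims, however, are not proved. For general $k$ you set up a greedy block-by-block construction along a median order with a lookahead window of size roughly $2^{4k+6}k$, but you yourself identify that the union bound over the $k$ domination constraints is vacuous and that the balancing of window length against discarded vertices ``is where the real work lies'' --- and that argument is never supplied. That balancing \emph{is} the theorem: nothing in the proposal shows that within a bounded window one can always find the next block of $k$ vertices with the required common-domination pattern while discarding only a bounded number of vertices, so no linear bound follows from what is written. Similarly, for the sharp $k=2$ bound the crucial claim that a local rerouting scheme discards at most one vertex in every three is asserted via ``a case analysis'' that is not carried out; a purely local greedy along the median order does not obviously achieve this amortized loss, since blocked extensions can interact and cascade, and making the $\lceil 2n/3\rceil$ count go through is known to require a genuinely strengthened induction statement (controlling how the constructed square of a path meets the end of the order) rather than a one-line amortization. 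As it stands, the proposal establishes tightness for $k=2$ but neither of the two existence bounds.
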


A square of a directed path, considered in Theorem~\ref{thm:kth-powers-of-paths}, is an oriented graph obtained from a directed path by adding arcs between vertices at distance $2$. A~generalization of this structure is an oriented graph obtained from a directed path by adding arcs between vertices at some different distance. 

\begin{definition}\label{defi:C_k_l}
For any $2 \leq \ell < k$, let $\PPP{k}{\ell}$ be the oriented graph on $k$ vertices $v_1, \ldots, v_k$ with arcs $v_iv_{i+1}$ for $1 \leq i \leq k-1$ and $v_iv_{i+\ell}$ for $1 \leq i \leq k-\ell$. In other words, $\PPP{k}{\ell}$ is a~directed path on $k$ vertices with additional arcs between vertices at distance $\ell$. Let also $\CCC{k}{\ell}$ be the oriented graph on $k$ vertices $w_0, w_1, \ldots, w_{k-1}$ with arcs $w_iw_{i+1 \pmod k}$ and $w_iw_{i+\ell \pmod k}$ for $0 \leq i < k$.
\end{definition}

As $\PPP{k}{\ell}$ is a subgraph of the $\ell$-th power of $\PP{k}$, Theorem~\ref{thm:kth-powers-of-paths} implies that $\tau(\PPP{k}{\ell})$ is linear in terms of $p(\PPP{k}{\ell}) = k$. But the constant provided in Theorem~\ref{thm:kth-powers-of-paths} for large $\ell$ is very far from being optimal. 
The following theorem closes this gap and shows that for $\ell=2$ and $3$, the compressibility of $\PPP{k}{\ell}$ differs from the compressibility of $\PP{k}$.

\begin{theorem}\label{thm:path_with_steps}
For every $2 \leq \ell < k$, the following holds
\begin{itemize}[itemsep=0pt, topsep=2pt]
\item $\tau(\PPP{k}{2}) = \left\lfloor\frac{3k-1}{2}\right\rfloor$,
\item $\left\lfloor\frac{7k-1}{6}\right\rfloor \leq \tau(\PPP{k}{3}) \leq 3k$,
\item $\tau(\PPP{k}{\ell}) = k$ if $\ell \geq 4$.
\end{itemize}   
\end{theorem}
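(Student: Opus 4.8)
The statement splits into three regimes, and I would attack each with a combination of an explicit extremal (lower-bound) construction and a Hamiltonian-type embedding argument for the upper bound.

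For the case $\ell \geq 4$, the goal is $\tau(\PPP{k}{\ell}) = k$; since $p(\PPP{k}{\ell}) = k$ gives the trivial lower bound, only $\tau(\PPP{k}{\ell}) \leq k$ needs proof, i.e.\ every tournament $T$ on $k$ vertices admits a homomorphism $\PPP{k}{\ell} \to T$. The natural idea is to take a Hamiltonian path $u_1 u_2 \cdots u_k$ of $T$ (which exists in every tournament) and map $v_i \mapsto u_i$; the path arcs $v_i v_{i+1}$ are then satisfied automatically, and it remains to handle the ``long'' arcs $v_i v_{i+\ell}$. These need not point the right way in $T$, so the real work is to choose the Hamiltonian path cleverly, or to reroute/permute it, so that for every $i$ the arc $u_i u_{i+\ell}$ is present. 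I expect the key tool to be the classical fact that in a tournament one can locally reorder a Hamiltonian path, together with the observation that when $\ell \geq 4$ there is enough ``slack'' between consecutive constrained pairs to fix violations one at a time without creating new ones; this is exactly where $\ell \geq 4$ (as opposed to $\ell = 2, 3$) becomes essential, and I anticipate this case analysis to be the main obstacle of the whole theorem. One clean approach: show that any tournament on $k$ vertices contains a spanning subgraph isomorphic to $\PPP{k}{\ell}$ for $\ell\ge 4$ by an explicit greedy/exchange argument on the Hamiltonian path, handling each bad long arc $u_i u_{i+\ell}$ by swapping $u_{i+\ell}$ with an adjacent vertex chosen so that all (at most a bounded number of) constraints touching the swapped positions are restored.

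For $\ell = 2$ we must show $\tau(\PPP{k}{2}) = \lfloor (3k-1)/2 \rfloor$. The lower bound requires a tournament $T$ on $\lfloor (3k-1)/2\rfloor - 1$ vertices with no homomorphic image of $\PPP{k}{2}$; I would build $T$ as a blow-up/composition of small acyclic-like pieces (the figures \figuretwo, \figurethree in the paper strongly suggest a construction assembled from copies of $\CC{3}$ and transitive parts arranged so that any homomorphic image of $\PPP{k}{2}$ is forced to ``waste'' roughly a factor $3/2$ of vertices per two steps) and then verify by a direct structural argument that $\PPP{k}{2}\not\to T$. For the matching upper bound $\tau(\PPP{k}{2}) \leq \lfloor(3k-1)/2\rfloor$, I would show every tournament on that many vertices contains a copy of $\PPP{k}{2}$ (or a homomorphic image) by induction on $k$: locate a $\CC{3}$ or a suitable short gadget, use it to embed the next two vertices $v_j, v_{j+1}$ together with the required back-arcs $v_{j-1}v_{j+1}$ and $v_j v_{j+2}$, and recurse on a tournament smaller by $3$ vertices; the constant $3/2$ comes precisely from consuming $3$ vertices of $T$ per $2$ vertices of the path, with the floor and the $-1$ coming from boundary effects at the two ends of the path.

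For $\ell = 3$ the target is only the inequality pair $\lfloor(7k-1)/6\rfloor \leq \tau(\PPP{k}{3}) \leq 3k$. The upper bound $3k$ is the weakest of the three and should follow from Theorem~\ref{thm:kth-powers-of-paths} directly (a $\PPP{k}{3}$ is a subgraph of the cube of a directed path), or from a cruder version of the $\ell=2$ embedding argument allowing $3$ vertices of $T$ per vertex of the path. The lower bound $\lfloor (7k-1)/6\rfloor$ calls for a construction consuming about $7$ vertices per $6$ path-vertices, again built as a composition of $\CC{3}$-type and transitive gadgets tuned to the $\ell=3$ arcs (the figures \figurefour\ and the "$v_4,\dots,v_7$" gadget in \figureTg\ look like the relevant building block); one verifies that the longest directed path there is short relative to its order and that the extra distance-$3$ arcs cannot be accommodated more efficiently, forcing the $7/6$ ratio. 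Throughout, the recurring technical difficulty — and the place I would spend the most care — is proving the \emph{non-existence} of a homomorphism into the extremal constructions, since that requires arguing about all possible homomorphic images simultaneously rather than just embeddings; I would organize this by tracking, for a hypothetical homomorphism, the sequence of ``blocks'' of $T$ that consecutive path vertices land in and showing that the long arcs force block-changes at a rate that exhausts the vertex budget.
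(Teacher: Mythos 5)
Your proposal is a plan rather than a proof, and two of its load-bearing steps are respectively unjustified and wrong. For $\ell\geq 4$ you aim to find a \emph{spanning} copy of $\PPP{k}{\ell}$ in every $k$-vertex tournament by repairing a Hamiltonian path with local swaps; you give no argument for why $\ell\geq 4$ makes the repairs terminate, and you are attempting something strictly stronger than what is needed. The paper's route avoids this entirely: decompose $T$ into strongly connected components $T_1\Rightarrow\cdots\Rightarrow T_m$; if some component has at least $\ell-1$ vertices it contains $\CC{\ell-1}$, and the \emph{non-injective} homomorphism $\PPP{k}{\ell}\to\CC{\ell-1}$ (reduce indices modulo $\ell-1$, using $\ell\equiv 1$) finishes; otherwise every component has fewer than $\ell-1$ vertices, so along any Hamiltonian path of $T$ vertices $\ell$ apart lie in distinct components and the long arcs come for free. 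That dichotomy — in particular the idea of collapsing $\PPP{k}{\ell}$ onto a short cycle — is the missing idea, and without it your exchange argument is an open-ended case analysis. For $\ell=3$, your claim that the upper bound $3k$ ``follows from Theorem~\ref{thm:kth-powers-of-paths} directly'' is false: for third powers that theorem only guarantees a power of a path of order $n/(3\cdot 2^{18})+1$, i.e.\ a linear bound with constant about $786{,}000$, not $3$ (the paper states explicitly that this constant is far from optimal and that the theorem closes the gap). The actual proof of the $3k$ bound is an induction that deletes a $3$-vertex hitting set of $\dom(T)$, and it rests on the structure theorem for domination graphs of tournaments (Fisher et al.) together with the observation that two disjoint non-dominated arcs with non-adjacent (in $\dom(T)$) sources force a copy of $\CCC{5}{3}$, to which $\PPP{k}{3}$ maps homomorphically; nothing in your sketch points toward this mechanism.

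On the lower bounds your instincts match the paper — for $\ell=2$ the extremal tournaments are $\PP{(k-1)/2}\odot\CC{3}$ (odd $k$) and $\bipart{\TT{1}}{(\PP{k/2-1}\odot\CC{3})}$ (even $k$), and for $\ell=3$ one takes $\TT{a}\odot\widetilde T$ for a specific $7$-vertex tournament $\widetilde T$ — but you stop exactly where the work starts: verifying non-existence of homomorphisms, namely that any homomorphism of $\PP{k}$ into the $\ell=2$ constructions sends three consecutive vertices onto a $\CC{3}$ (impossible for $\PPP{k}{2}$), and the case analysis showing $\PPP{7}{3}\not\to\widetilde T$. Also note that for the $\ell=2$ upper bound you need not reprove anything by induction: the exact value $\left\lfloor\frac{3k-1}{2}\right\rfloor$ is precisely the tight ``square of a path'' statement of Theorem~\ref{thm:kth-powers-of-paths}, which is quoted for this purpose; your sketched $3$-vertices-per-$2$ induction is not obviously capable of recovering the exact constant.
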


\begin{proof}
For $\ell=2$, the graph $\PPP{k}{2}$ is just a~square of a~path, and Theorem~\ref{thm:kth-powers-of-paths} implies that every tournament on $\left\lfloor\frac{3k-1}{2}\right\rfloor$ vertices contains a~copy of $\PPP{k}{2}$. On the other hand, there are tournaments on $\left\lfloor\frac{3k-1}{2}\right\rfloor-1$ vertices that do not have a homomorphism from $\PPP{k}{2}$. For odd~$k$, we consider the tournament $\PP{(k-1)/2} \odot \CC{3}$, while for even $k$ consider the tournament $\bipart{\TT{1}}{(\PP{k/2-1} \odot \CC{3})}$. The considered tournaments have exactly $\left\lfloor\frac{3k-1}{2}\right\rfloor-1$ vertices and any homomorphism of $\PP{k}$ into them maps some three consecutive vertices into a~copy of $\CC{3}$, which cannot happen for the homomorphism of $\PPP{k}{2}$. 

If $\ell \geq 4$, then $\tau(\PPP{k}{\ell}) \geq k$ as there exists no homomorphism $\PPP{k}{\ell} \to \TT{k-1}$. To prove the upper bound, consider any tournament $T$ on $k$ vertices. Then, $T$ admits a~decomposition $T_1 \Rightarrow \ldots \Rightarrow T_m$ into strongly connected components. If any of those components is of size at least $\ell-1$, then it contains a~copy of $\CC{\ell}$, and since there is a~homomorphism $\PPP{k}{\ell} \to \CC{\ell-1}$, we have $\PPP{k}{\ell} \to T$. Otherwise, all strongly connected components are of size strictly smaller than $\ell-1$. This means that any function that maps the Hamiltonian path of $\PPP{k}{\ell}$ into any Hamiltonian path of $T$ induces a~homomorphism $\PPP{k}{\ell} \to T$. 

We are left with the hardest case $\ell = 3$.
To prove the lower bound, consider a~tournament $\widetilde T$ on 7 vertices $v_1, \ldots, v_7$, with arcs $v_iv_j$ for $1 \leq i < j \leq 6$ and $N^+(v_7) = \{v_1, v_2, v_4\}$, see Figure~\ref{fig:T7}. 
We want to prove that there exists no homomorphism $\PPP{7}{3} \to \widetilde T$. This implies that there exists no homomorphism of $\PPP{6a+1}{3} \to \TT{a} \odot \widetilde T$ for any integer $a \geq 1$ and the claimed lower bound follows.

\begin{figure}[ht]
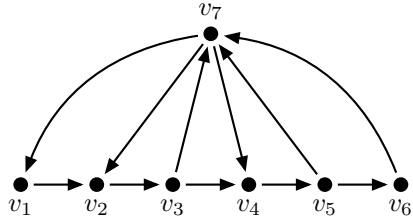

\centering
\figureTg
\caption{Tournament $\widetilde T$ from the proof of Theorem \ref{thm:path_with_steps} for $\ell=3$. The bottom vertices induce a~transitive tournament.}
\label{fig:T7}
\end{figure}

Assume that $x_1, x_2, \ldots, x_7$ are the images of consecutive vertices of $\PPP{7}{3}$ under some homomorphism $\PPP{7}{3} \to \widetilde T$. As the vertices $v_1, \ldots, v_6$ induce a transitive tournament, there must exist the smallest~$i$ such that $x_i = v_7$. 
If $i=1$, then since $x_1x_4$ is an arc and $x_1x_2x_3x_4$ is a path, we must have $x_4 = v_4$. But then it is not possible to find a path  $x_4x_5x_6x_7$ with an arc $x_4x_7$.
If $2 \leq i \leq 4$, then similarly $x_{i+3} = v_4$, hence $x_{i+2} \in \{v_1, v_2, v_3\}$. But since $x_{i-1}x_i$ is an arc, we have $x_{i-1} \in \{v_3, v_5, v_6\}$ and it is not possible for $x_{i-1}x_{i+2}$ to be an arc.
If $5 \leq i \leq 6$, then by a~symmetric argument we conclude that $x_{i-3} = v_3$, $x_{i-2} \in \{v_4, v_5, v_6\}$ and $x_{i+1} \in \{v_1, v_2, v_4\}$, hence $x_{i-2}x_{i+1}$ cannot be an arc.
Finally, if $i = 7$, then we must have $x_j = v_j$ for every $1 \leq j \leq 7$, but in this case $x_4x_7$ is not an arc. This finishes the proof of the lower bound. 

In order to prove the upper bound, we apply the following theorem that characterizes the general structure of the domination graphs of tournaments.
Here, by a~\emph{directed caterpillar} we mean a~directed path with possible additional outgoing pendant arcs. 

\begin{theorem}[Fisher et al.~\cite{FLMR95}]\label{thm:domination_graphs}
The domination graph of a tournament is either an odd directed cycle with possible outgoing pendant arcs and isolated vertices, or a~forest of directed caterpillars.
\end{theorem}

We prove by induction on $k$ that for every tournament on $3k$ vertices there exists a~homomorphism from $\PPP{k}{3}$. For $k \leq 3$, an oriented graph $\PPP{k}{3}$ is just a directed path $\PP{k}$, which can be mapped homomorphically into any tournament on $k$ vertices (Example~\ref{ex:paths}).
 
For $k > 3$, let $T$ be any tournament on $3k$ vertices. Note that $\PPP{k}{3} \to \CCC{5}{3}$, so we may assume that $T$ does not contain $\CCC{5}{3}$. Denote vertices of $\PPP{k}{3}$ by $w_1, \ldots, w_k$ with arcs of the form $w_iw_{i+1}$ and $w_iw_{i+3}$. 
Whenever we use the induction hypothesis to obtain a homomorphism $\PPP{k-1}{3} \to T$, we think of this $\PPP{k-1}{3}$ as of a~subgraph of $\PPP{k}{3}$ induced by vertices $w_2, w_3, \ldots, w_k$. In particular, in order to find a homomorphism $\PPP{k}{3} \to T$, we only need to map $w_1$ to a vertex  dominating the images of $w_2$ and $w_4$. This is possible exactly when the images of $w_2$ and $w_4$ induce an arc which does not belong to $E(\dom(T))$.

It turns out that if $\dom(T)$ contains a~cycle of length at least five, two caterpillars, or a caterpillar with a directed path of length at least three, then $T$ must contain $\CCC{5}{3}$. It follows from the following observation.

\begin{observation}\label{obs:existence_of_C_2_5}
If $\dom(T)$ contains two vertex disjoint arcs, whose sources are not connected by an arc in $\dom(T)$, then $T$~contains a~copy of $\CCC{5}{3}$. 
\end{observation}

\begin{proof}
Let $vw$ and $v'w'$ be the two arcs in $\dom(T)$, and without loss of generality let $v'v \in E(T) \setminus E(\dom(T))$. 
By Observation \ref{obs:arcs_in_dom(T)}, all arcs between $vw$ and $v'w'$ are then completely determined. Moreover, since $v'v \not \in E(\dom(T))$, there exists a~vertex $u$ which dominates $v'v$, in particular it is neither $w$ nor $w'$. Since $vw$ and $v'w'$ are not dominated, we have that $wu$, $w'u \in E(T)$. Now, it is straightforward to check that vertices $v$, $w$, $v'$, $w'$ and $u$, in this order, induce a~copy of $\CCC{5}{3}$, as depicted in Figure \ref{fig:C5}.
\end{proof}

\begin{figure}[ht]
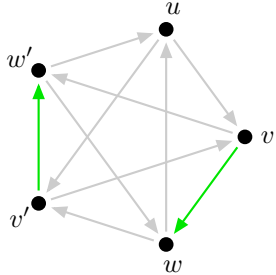

\centering
\figurezero
\caption{$\CCC{5}{3}$ created in $T$ using Observation~\ref{obs:existence_of_C_2_5}. Green arcs belong to $E(\dom(T))$.}
\label{fig:C5}
\end{figure}

By Theorem~\ref{thm:domination_graphs} and Observation \ref{obs:existence_of_C_2_5}, $\dom(T)$ must be either a~directed triangle with some outgoing arcs or a~directed caterpillar with a longest directed path of length at most $2$. In particular, there exist at most three vertices with a~positive out-degree in $\dom(T)$, hence it is possible to find a~subset $D \subseteq V(T)$ of size at most $3$ such that each arc from $E(\dom(T))$ is incident to at least one vertex from $D$. Let $T'$ be the subtournament of $T$ induced by $V(T) \setminus D$. Since $\abs{V(T')} \geq 3(k-1)$, by the induction hypothesis there exists a homomorphism $\PPP{k-1}{3} \to T'$. Moreover, the arc induced by the images of $w_2$ and $w_4$ cannot belong to $E(\dom(T))$, hence we can extend this homomorphism to $\PPP{k}{3} \to T$.
\end{proof}

\section{Compressibility of $\ell$-layered graphs}

In this section, we study a class of acyclic oriented graphs $H$ for which $\tau(H)$ = $p(H)$. The considered class contains in particular graphs $\PPP{k}{\ell}$ for $\ell\geq4$, for which the equality holds by Theorem~\ref{thm:path_with_steps}, as well as some graphs with out-degree not bounded by $p(H)$. It also generalizes the results of Valadkhan \cite{Valadkhan} for orientations of trees and cycles. 

\begin{definition}
We say that an acyclic oriented graph $H$ is \emph{$\ell$-layered} if for every vertex $v \in V(H)$ which is not a~sink nor a~source there exists a~pair $(i,j) \in \mathbb Z_\ell^2$ such that the length of every directed path from any source of $H$ to $v$ is congruent to $i$ modulo $\ell$ and the length of every directed path from $v$ to any sink of $H$ is congruent to $j$ modulo $\ell$. If a~vertex $v$ was assigned a~pair $(i,j)$, we will say that it is of type $(i,j)$. 
\end{definition}

For $\ell \geq 2$, let $\Lout{\ell}$ denote the family of all $\ell$-layered acyclic oriented graphs. 

\begin{example} For any $3 \leq \ell < k$, the graph $\PPP{k}{\ell}$ is $(\ell-1)$-layered.
\end{example}

\begin{example}
Consider an acyclic oriented graph $H$ and some integer $\ell \geq 2$, and replace each arc $uv$ of $H$ by a~directed path of length $\ell$ from $u$ to $v$. Then, the resulting graph, also called an $(\ell-1)$-subdivision of $H$, is $\ell$-layered.
\end{example}

\begin{example}
For any integers $k \geq 3$ and $\ell \geq 2$, each acyclic orientation of a~cycle on $k$ vertices is $\ell$-layered. 
\end{example}

\begin{example}
An acyclic oriented graph obtained from a directed path $v_1v_2\ldots v_k$ by adding a~new vertex $v$ and an arc $v_{k-2}v$ is not $\ell$-layered for any $\ell \geq 2$. It follows from the fact that the distance from $v_1$ to $v$ is $k-2$, while from $v_1$ to $v_k$ it is $k-1$.
On the other hand, it is easy to observe that any acyclic orientation of a~tree can be mapped homomorphically to some directed path, which is $\ell$-layered for every $\ell \geq 2$.
\end{example}

Since the oriented graph in Proposition \ref{prop:T_3-free_not_z_bounded} is $2$-layered, the class $\Lout{2}$ is not polynomially $\tau$-bounded. However, for $\ell \geq 3$ the situation is completely different.

\begin{theorem}\label{thm:l-layered}
Let $\ell \geq 3$ and $H \in \Lout{\ell}$ with $p(H) \geq 6$. Then, $\tau(H) = p(H)$. 
\end{theorem}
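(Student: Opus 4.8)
The lower bound $\tau(H) \geq p(H)$ is immediate from the trivial observation (quoted in the introduction) that $\TT{p(H)-1}$ contains no homomorphic image of an acyclic oriented graph with a directed path on $p(H)$ vertices. So the whole content is the upper bound: every tournament $T$ on $p=p(H)$ vertices admits a homomorphism $H \to T$. The plan is to exploit the layering to reduce the problem to finding, inside $T$, a suitable homomorphic image of a short directed structure (a path, or a path-like gadget carrying the cyclic information) and then wrapping the layers around it.

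First I would set up the structure theory. Since $H$ is $\ell$-layered with $\ell \geq 3$, assign to every non-sink, non-source vertex its type $(i,j) \in \mathbb{Z}_\ell^2$; the key consequence is that if $uv \in E(H)$ with both $u,v$ internal, then the first coordinates satisfy $i_v \equiv i_u + 1 \pmod \ell$ (and likewise for the second coordinates, read backwards). Thus the ``first-coordinate'' function behaves like a proper level function modulo $\ell$ on the internal vertices, and every directed path in $H$ of length $\ell$ among internal vertices ``wraps around'' one full cycle of residues. I would then observe that $H$ maps homomorphically into $\CCC{k'}{\ell}$-type gadgets, or more directly: $H \to G$ for a small oriented graph $G$ whose structure is ``$\ell$ residue classes cyclically connected, with two extra apex-type vertices for sources and sinks.'' Concretely, because $p(H) \geq 6 \geq 2\ell$ is false in general (we only know $\ell \geq 3$, $p \geq 6$), I would be careful here: the right target is a tournament on exactly $p$ vertices, and the claim to prove is that we can always realize the layered combinatorics inside it.

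The main line of argument I would pursue: decompose $T$ into strongly connected components $T_1 \Rightarrow \cdots \Rightarrow T_m$ in the condensation order. A longest directed path of $T$ has $p(T) = p$ vertices (every tournament is traceable), and it passes through the components in order, spending $|T_r|$ vertices in $T_r$. Now use the classical fact that a strongly connected tournament on $s \geq 3$ vertices is vertex-pancyclic: it contains directed cycles of every length from $3$ to $s$ through each vertex. The idea is that inside each component we have enough cyclic freedom to absorb the modular constraint imposed by the layering, while across components the transitive (path-like) part of $H$ maps forward monotonically. I would handle two regimes: (i) if some component $T_r$ is large, say $|T_r| \geq \ell$, it contains $\CC{\ell}$ and even $\Csquare{\ell}$-like structure, and since the internal part of $H$ maps homomorphically into a single ``$\ell$-cycle of residue classes,'' I can route all internal vertices of $H$ through $T_r$ (using pancyclicity to fix the residue bookkeeping) and route the sources/sinks to appropriate earlier/later vertices on the Hamiltonian path of $T$; (ii) if all components are small (each of size $< \ell$), then, much as in the $\ell \geq 4$ case of Theorem~\ref{thm:path_with_steps}, any component-respecting map of a Hamiltonian path of $H$'s ``backbone'' into a Hamiltonian path of $T$ already is a homomorphism, because no arc of $H$ can jump strictly within a component in a way that the Hamiltonian path ordering violates — here the hypothesis $\ell \geq 3$ and $p(H) \geq 6$ is what rules out the bad short configurations.

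The step I expect to be the genuine obstacle is case (i): making precise that the internal vertices of an arbitrary $\ell$-layered $H$ — which may have unbounded out-degree and a complicated internal shape, not just a path — really do map into a single strongly connected component of size $\ge \ell$ in a way compatible with the residues, and simultaneously that the sources and sinks can be placed. For the sources/sinks one needs that $T$ has a vertex dominating (resp. dominated by) the whole image of the internal part; this will require choosing the image of the internal part to lie in the out-neighborhood of some vertex, which forces a counting/pigeonhole argument (reminiscent of the claim in the proof of Theorem~\ref{thm:2-out-degree_bounded}) balancing $|T|=p$ against $\ell$. Getting the inequalities to close with the clean bound $\tau(H)=p(H)$ rather than $p(H)+O(1)$ — in particular squeezing out the ``$+$ constant'' that case analyses of this type usually leave behind — is, I expect, where the bulk of the care goes, and is presumably exactly why the hypothesis is stated as $p(H)\ge 6$.
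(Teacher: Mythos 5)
Your lower bound is fine and your small-components/Hamiltonian-path idea matches the easy half of the paper's argument, but the proposal has a genuine gap at exactly the place you flag, and that place is the heart of the theorem. First, your dichotomy ``some component of size $\geq \ell$'' versus ``all components of size $< \ell$'' uses the wrong threshold: $\ell$ may greatly exceed $p(H)$ (a directed path is $\ell$-layered for every $\ell$), so your case (i) can be vacuous, and your case (ii) then includes tournaments with a strongly connected component as large as $p(H)$ --- in particular $T$ itself strongly connected. There the claim that any component-respecting map of a ``backbone'' of $H$ onto a Hamiltonian path of $T$ is automatically a homomorphism is false: arcs of $H$ that jump several levels can land on backward arcs inside a large component, and $H$ (an arbitrary layered DAG, possibly of unbounded out-degree) has no single backbone path anyway --- one must use a level function such as $v \mapsto$ (length of a longest path starting at $v$), as the paper does. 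The paper's dichotomy is ``some component of size at least $\min(5,\ell)$'' versus ``all smaller'', exploiting that a strongly connected tournament on at least $5$ vertices contains a vertex lying on both a $\CC{3}$ and a $\CC{4}$ and hence receives a homomorphic image of $\CC{\ell}$ for \emph{every} $\ell \geq 3$; that is what lets ``large'' mean $5$ rather than $\ell$.

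Second, even when a large component exists, your plan to route sources and sinks to ``earlier/later vertices on the Hamiltonian path'' requires vertices of $T$ outside that component on both sides; these need not exist when the component is first or last, and when $T$ is strongly connected there is no room at all. This strongly connected case is where the paper spends most of its effort: it reduces to strongly connected tournaments on $6$ vertices, shows every such tournament contains one of five special $5$-vertex tournaments $T_a,\dots,T_e$ (note that $\CCC{5}{2}$ alone is \emph{not} a good target --- this is precisely why $p(H) \geq 6$ is assumed), constructs a universal intermediate graph $Q_\ell$ (a disjoint union of $\ell$ directed $\ell$-cycles recording the types, plus a source apex and a sink apex) with $H \to Q_\ell$, and then verifies $Q_\ell \to T$ for each $T \in \{T_a,\dots,T_e\}$ and $\ell \in \{3,4,5\}$, reducing $\ell \geq 6$ to these cases by shortcutting each cycle through directed triangles. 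Your sketch replaces all of this by a vague pigeonhole/counting hope (and, incidentally, asks for a vertex dominating the \emph{entire} internal image, which is more than is needed: sources only have to dominate the images of the first layer). As written, the proposal does not prove the upper bound.
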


\begin{proof}
Firstly, observe that $H$ can be mapped homomorphically into $\bipart{\CC{\ell}}{\TT{1}}$. Indeed, if we denote the consecutive vertices of $\CC{\ell}$ by $w_0, w_1, \ldots, w_{\ell-1}$ and the only vertex of $\TT{1}$ by $w$, then we can define a~map $H \to \bipart{\CC{\ell}}{\TT{1}}$ in the following way: assign every source of $H$ to $w_0$, every sink of $H$ to $w$, and every vertex of type $(i,j)$ to $w_i$. It is straightforward to check that this is indeed a~homomorphism.

If $T'$ is any tournament on $5$ vertices containing a~copy of~$\CC{5}$, then some vertex of $T'$ is contained in a~copy of $\CC{3}$ and a~copy of $\CC{4}$. 
Thus, there is a homomorphism $\CC{\ell} \to T'$ for any $\ell \geq 3$.
In particular, there always exists a~homomorphism $H \to \bipart{T'}{\TT{1}}$. An analogous argument shows that there always also exists a~homomorphism $H \to \bipart{\TT{1}}{T'}$.

Fix now a~tournament $T$ on $p(H)$ vertices. Assume that $T$ is not strongly connected. If at least one strongly connected component is of size at least $\min(5, \ell)$, then there exists a~homomorphism $H \to T$ by the observation above. Therefore, we may assume that all strongly connected components of $T$ are of size smaller than $\min(5, \ell)$. 
For each $v \in V(H)$, let $\ell(v)$ denote the length of any longest directed path in $H$ starting at $v$. Choose any Hamiltonian path $P$ in $T$ with vertices in order $v_{p(H)-1}, \ldots, v_0$. Since every strongly connected component of $T$ is of size smaller than $\ell$, we have $v_iv_j \in E(T)$ for any $i - j > \ell$. Define a~map $H \to T$ by assigning each $v \in V(H)$ to $v_{\ell(v)}$. Since for each arc $vw \in E(H)$ we have either $\ell(v) - \ell(w) = 1$ or $\ell(v) - \ell(w) > \ell$, it follows that this map is indeed a~homomorphism.

Since any strongly connected tournament on $p(H)$ vertices contains a~strongly connected subtournament on $6$ vertices, it is enough to show that there exists a~homomorphism from $H$ to any strongly connected tournament on $6$ vertices.

Let us introduce the following tournaments on $5$ vertices:
\begin{itemize}
\item $T_a$, obtained from $\CCC{5}{3}$ by reversing the arc $w_1w_4$;
\item $T_b$, obtained from $\CCC{5}{2}$ by reversing the arc $w_1w_4$;
\item $T_c$, obtained from $\TT{5}$ by reversing the arc between the sink and the source;
\item $T_d$, obtained from $T_c$ by reversing the arc $w_3w_5$;
\item $T_e$, obtained from $T_c$ by reversing the arc $w_2w_4$. 
\end{itemize}
All of them are depicted in Figure \ref{fig:tournaments-3-labeled}. Let $\mathcal T = \{T_a, T_b, T_c, T_d, T_e\}$.
By showing a series of claims we will prove that every strongly connected
tournament on 6 vertices contains some tournament from $\mathcal T$, and that there exists a homomorphism from $H$ to any tournament in $\mathcal T$.

\begin{figure}[ht]
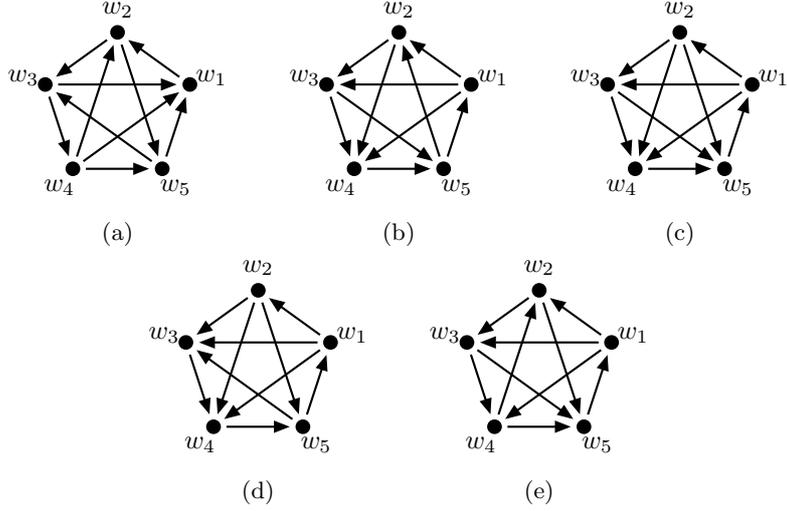

\centering
\begin{subfigure}[b]{0.24\linewidth}
\centering
\figureTa 
\caption{}\label{fig:Ta}
\end{subfigure}
\begin{subfigure}[b]{0.24\linewidth}
\centering
\figureTb 
\caption{}\label{fig:Tb}
\end{subfigure}
\begin{subfigure}[b]{0.24\linewidth}
\centering
\figureTc 
\caption{}\label{fig:Tc}
\end{subfigure}

\begin{subfigure}[b]{0.24\linewidth}
\centering
\figureTd 
\caption{}\label{fig:Td}
\end{subfigure}
\begin{subfigure}[b]{0.24\linewidth}
\centering
\figureTe 
\caption{}\label{fig:Te}
\end{subfigure}
\caption{Tournaments used in the proof of Theorem \ref{thm:l-layered}.}
\label{fig:tournaments-3-labeled}
\end{figure}

\begin{claim}\label{clm:5-vertex-tournaments}
Every strongly connected tournament on $5$ vertices is isomorphic to $\CCC{5}{2}$ or some $T \in \mathcal T$.
\end{claim}
\begin{proof}
Let $T$ be a~strongly connected tournament on $5$ vertices $w_1, \ldots, w_5$ with arcs $w_5w_1$ and $w_iw_{i+1}$ for $1 \leq i \leq 4$. If there are no vertices in $T$ with out-degree equal to $3$, then $d^+(w_i) = 2$ for every $1 \leq i \leq 5$ and $T$ is isomorphic to $\CCC{5}{2}$ (since $\CCC{5}{2}$ and $\CCC{5}{3}$ are isomorphic).

Assume now that there is exactly one vertex $v$ in $T$ with out-degree $3$. Then, there is also exactly one vertex $w$ with out-degree $1$. If $vw \in E(T)$, then by reversing an arc $vw$ we obtain a~tournament $T'$ with all vertices having out-degree $2$, hence $T'$ is isomorphic to $\CCC{5}{3}$ and $T$ is isomorphic to either $T_a$ or $T_b$. If $wv \in E(T)$, then the three remaining vertices of $T$ are in out-neighborhood of $v$ and in-neighborhood of $w$. They must induce a~copy of $\CC{3}$, since $v$ is the only vertex with out-degree $1$. But then, $T$ is isomorphic to $T_e$.

We are left with the case when there are two vertices with out-degree $3$. It is easy to see that they must be neighbors in a~copy of $\CC{5}$ contained in $T$, which determines all but one arc in $T$. Depending on the orientation of this remaining arc, we conclude that $T$ is isomorphic either to $T_c$ or to $T_d$. 
\end{proof}

\begin{claim}\label{clm:T-in-6-vertex}
Every strongly connected tournament on $6$ vertices contains a~copy of some $T \in \mathcal T$.
\end{claim} 
\begin{proof}
By Claim \ref{clm:5-vertex-tournaments}, it is enough to find a~strongly connected subtournament with a~vertex of in-degree or out-degree equal to $3$.
Let $T$ be any strongly connected tournament on $6$ vertices. It must contain a~copy of $\CC{5}$ and vertices of this copy induce a~strongly connected subtournament $T'$. If $T'$ is isomorphic to some element of $\mathcal T$, then we are done. Otherwise, by Claim \ref{clm:5-vertex-tournaments}, it must be isomorphic to $\CCC{5}{2}$; let $w_1, \ldots, w_5$ be consecutive vertices of the outer directed cycle of $T'$, and let $w$ denote the remaining vertex of $T$. Since $T$ is strongly connected, $w$ has in-neighbors and out-neighbors in $T'$; without loss of generality, we may assume that $w_1w, ww_2 \in E(T)$. If $ww_4 \in E(T)$, then the subtournament $T_1$ induced by vertices $w, w_2, w_3, w_4, w_1$ is strongly connected and in-degree of $w_4$ in $T_1$ is equal to $3$. If $w_4w \in E(T)$, then the~subtournament $T_2$ induced by vertices $w, w_2, w_4, w_5, w_1$ is strongly connected and out-degree of $w_4$ is equal to $3$. In both cases, $T_1$ or $T_2$ is isomorphic to some element of $\mathcal T$, which finishes the proof. 
\end{proof}

To simplify the proof that $H$ has a homomorphism to each $T\in\mathcal T$, we want to construct an oriented graph $Q_\ell$ such that $H$ can be mapped homomorphically into $Q_\ell$ and then for each $T$ provide a homomorphism from $Q_\ell$. For every $0 \leq i < \ell$, let $D_i$ be a~directed cycle on a~vertex set $\{(j, i-j)  \in \mathbb Z_\ell^2: 0 \leq j < \ell\}$ with arcs from $(j, i-j)$ to $(j+1, i-j-1)$ for every $0 \leq j < \ell$. Define $Q_\ell$ as a~disjoint union of $D_i$, over all $0 \leq i < \ell$, and two additional vertices $v_s$, $v_t$, with arcs joining $v_s$ to $v_t$, $v_s$ to $(1,i)$, and $v_t$ from $(i,1)$ for all $0 \leq i < \ell$. Since the graph $H$ is $\ell$-layered, we have a~natural homomorphism $H \to Q_\ell$ which maps all sources of $H$ to $v_s$, all sinks of $H$ to $v_t$, and all vertices of type $(i,j)$ to the vertex $(i,j)$ for every pair $(i,j) \in \mathbb Z_\ell^2$.  

\begin{claim}\label{clm:3-4-layered-gadget}
Let $T$ be a~tournament on at least $5$ vertices. Assume there exist vertices $u, v \in V(T)$ such that $uv \in E(T)$ and:
\begin{itemize}
\item $vw, wu, uz, zv \in E(T)$ for some $w, z \in V(T)$ and $w$ is contained in a~copy of $\CC{3}$,
\item $ux, xy, yv \in E(T)$ for some $x, y \in V(T)$ and an arc $xy$ is contained in a~copy of $\CC{3}$ and $\CC{4}$.
\end{itemize}
If $\ell=3$ or $\ell=4$, then there exists a~homomorphism $Q_\ell \to T$.
\end{claim}

\begin{proof}
Start defining the homomorphism $Q_\ell \to T$ by assigning $v_s$ to $u$ and $v_t$ to $v$. It remains to define homomorphism $D_i \to T$ for every $0 \leq i < \ell$ such that the image of $(1,i)$ is in out-neighborhood of $u$ and the image of $(i,1)$ is in the in-neighborhood of $v$. Assign $(1,1)$ to $z$, $(1,2)$ to $x$, and $(2,1)$ to $y$. If $\ell = 3$, then assign $(0,1)$ to $u$ and $(1,0)$ to $v$. If $\ell=4$, then assign $(0,1)$ and $(1,1)$ to $z$, $(3,1)$ to $u$, and $(1,3)$ to $v$. All of these assignments are depicted in Figure \ref{fig:layered-3-4-gadget}. Using the assumptions in the claim, it is straightforward to check that this can be extended to a~homomorphism $Q_\ell \to T$.
\end{proof}

\begin{figure}[ht]
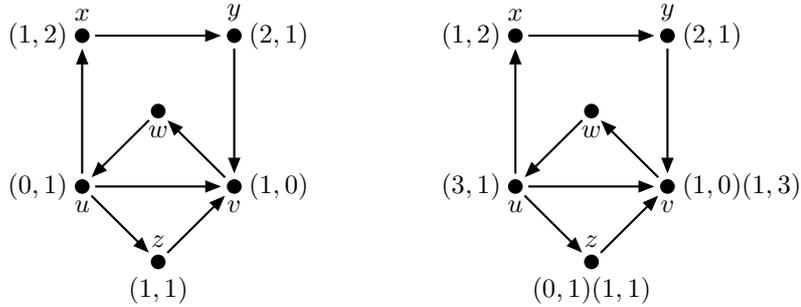

\centering
\begin{subfigure}[b]{0.4\linewidth}
\centering
\figureTlaygadget
\end{subfigure}
\begin{subfigure}[b]{0.4\linewidth}
\centering
\figureTlaygadgettwo 
\end{subfigure}
\caption{Partial homomorphisms $Q_3 \to T$ and $Q_4 \to T$ from the proof of Claim \ref{clm:3-4-layered-gadget}. The arc $xy$ is assumed to be contained in a~copy of $\CC{4}$ and $\CC{3}$, while the~vertex $z$ is assumed to be contained in a~copy of $\CC{3}$.}
\label{fig:layered-3-4-gadget}
\end{figure}

\begin{claim}\label{clm:Q_l-T-3-5}
For every $3 \leq \ell \leq 5$ and every $T \in \mathcal T$, there exists a~homomorphism $Q_\ell \to T$.
\end{claim}

\begin{proof}
If $\ell=3$ or $\ell = 4$, it is enough for every $T \in \mathcal T$ to find vertices $u, v \in V(T)$ satisfying the assumptions of Claim \ref{clm:3-4-layered-gadget}. It is easy to verify that one can choose:
\begin{itemize}[itemsep=0pt]
\item $w_4$ as $u$ and $w_5$ as $v$ for $T_a$,
\item $w_1$ as $u$ and $w_4$ as $v$ for $T_b$,
\item $w_1$ as $u$ and $w_4$ as $v$ for $T_c$,
\item $w_5$ as $u$ and $w_3$ as $v$ for $T_d$,
\item $w_1$ as $u$ and $w_4$ as $v$ for $T_e$.
\end{itemize}

Consider now $\ell = 5$. Note that for every $T \in \mathcal T$ there is a~copy of $\CC{5}$ with consecutive vertices $w_1, w_2, \ldots, w_5$, and denote it by $C_T$. Each $D_i$ for $0 \leq i < 5$ can be mapped homomorphically into $C_T$ in five different ways. We claim that for every $T \in \mathcal T$ there exists a~homomorphism $Q_5 \to T$ that maps each $D_i$ into $C_T$. Note that if the image of $v_s$ is of out-degree $k$, then for every $0 \leq i < 5$ there are $k$ choices for a~homomorphism $D_i \to C_T$ that agrees with $v_s$, and if the image of $v_t$ is of in-degree $k'$, then there are $k'$ choices for a~homomorphism $D_i \to C_T$ that agrees with $v_t$. Moreover, for every $T \in \{T_a, T_b, T_c, T_d\}$ there exist vertices $u, v \in V(T)$ such that $d^+(u) = 3$, $d^-(v) = 3$, and $uv \in E(T)$. Therefore, if we choose $u$ as the image of $v_s$ and $v$ as the image of $v_t$, then for each $0 \leq i < 5$ there exists a~homomorphism $D_i \to C_T$ agreeing with $v_s$ and $v_t$ simply by the pigeonhole principle. Finally, for $T_e$ it is straightforward to verify that one can choose $w_1$ as the image of $v_s$ and $w_4$ as the image of $v_t$.
\end{proof}

Claims \ref{clm:T-in-6-vertex} and \ref{clm:Q_l-T-3-5} together imply for every $3 \leq \ell \leq 5$ that $Q_\ell$ can be mapped into any strongly connected tournament on $6$ vertices. Hence, to finish the proof of the theorem, it is enough to show that for $\ell \geq 6$ the graph $Q_\ell$ also can be mapped homomorphically into every $T \in \mathcal T$. Note that for every $T \in \mathcal T$, each vertex of $T$ is contained in a~copy of $\CC{3}$. Therefore, if $v_1v_2v_3v_4$ is a directed path in  $D_i$ for some $0 \leq i < \ell$ and neither $v_2$ nor $v_3$ are neighbors of $v_s$ or $v_t$ in $Q_\ell$, we can aim to find a~homomorphism $D_i \to T$ which maps $v_1$ and $v_4$ to the same vertex of $T$, thus essentially reducing the length of $D_i$ by $3$. Since we can always perform this operation as long as the length of the cycle is at least $6$, we can reduce the problem to the case $\ell \leq 5$, which was proved in Claim \ref{clm:Q_l-T-3-5}. \end{proof}

Note that the assumed bound $p(H) \geq 6$ in Theorem~\ref{thm:l-layered} cannot be improved. Indeed, $\CCC{5}{2}$ does not contain two vertices $u$ and $v$ with paths of length $1$, $2$ and $3$ from $u$ to $v$, so the oriented graph $H$ consisting of paths of lengths $1$, $2$, $3$ and $4$ with common endpoints is $\ell$-layered with $p(H)=5$ and $\tau(H) \geq 6$. Analogous constructions can be provided for $p(H)=4$ and $p(H)=3$. In the cases $p(H) \leq 5$ one can easily show that the best bounds are $\tau(H) \leq 2$ when $p(H) = 2$, $\tau(H) \leq 4$ when $p(H) = 3$, and $\tau(H) \leq 6$ when $p(H) \in \{4, 5\}$ and $H$ is $\ell$-layered.

\section{Concluding remarks}

It is straightforward to construct, for any $k > 0$, a~sequence $(H_n)_{n \geq 1}$ of acyclic oriented graphs $H_n \in \Dout{k}$ such that $p(H_n) = n$ and for every $H \in \Dout{k}$ there exists a~homomorphism $H \to H_{p(H)}$. Therefore, to understand the asymptotic behavior of the compressibility of acyclic oriented graphs with out-degree at most $k$, it suffices to examine the sequence $(H_n)_{n \geq 1}$. However, even for $k=2$ we were able to compute $\tau(H_n)$ only for a~few initial values of $n$, and we were unable to find a~lower bound for $\tau(H_n)$ better than linear.

Let $T$ be a~tournament on $11$ vertices $v_0, \ldots, v_{10}$ with arcs $v_iv_{i+j}$ for $j \in \{1, 3, 4, 5, 9\}$ and indices taken modulo $11$. One can verify that every copy of~$\CC{3}$ in $T$ is dominated by some vertex, hence every $H \in \Dout{3}$ can be mapped homomorphically into $T \odot \CC{3}$. Therefore, to prove that $\Dout{3}$ is polynomially $\tau$-bounded, it suffices to show that $T$ satisfies Conjecture \ref{con:EH}. It would be interesting to prove Conjecture \ref{con:EH} for this graph, especially with some low exponent. 

\begin{problem}
For which acyclic oriented graphs $F$ is the family of $F$-free acyclic oriented graphs polynomially $\tau$-bounded?
\end{problem}

Theorem~\ref{thm:2-out-degree_bounded} shows that it holds for $F = \KK{1,3}$. Also, by Proposition \ref{prop:T_3-free_not_z_bounded}, if the family of $F$-free acyclic oriented graphs is polynomially $\tau$-bounded, then $F$ must be bipartite. 

The following definitions and notation are taken from \cite{Sopena16}. We say that an oriented graph~$H$ is an \emph{o-clique} if every two vertices of $H$ are joined by a~directed path of length at most~$2$. Define the \emph{absolute oriented clique number} of $H$, denoted by $\omega_{ao}(H)$, as the maximum size of an o-clique contained in $H$, and the \emph{relative oriented clique number} of $H$, denoted by $\omega_{ro}(H)$, as the maximum size of a~subset $S \subseteq V(H)$ such that every two vertices of $S$ are joined in $H$ by a~directed path of length at most $2$. It is clear that if $H$ is an o-clique and $T$ is a~tournament, then any homomorphism $H \to T$ must be injective, and for a~general oriented graph $H$ we have $\omega_{ao}(H) \leq \omega_{ro}(H) \leq \abs{V(T)}$.
For $k \geq 3$, let $\mathcal A_k$ denote the~family of all acyclic oriented graphs with absolute clique number at most $k$, and let $\mathcal R_k$ denote the~family of all acyclic oriented graphs with relative clique number at most $k$. We have $\mathcal R_k \subseteq \mathcal A_k$ and one may observe that $\Dout{k} \subseteq \mathcal R_{k^2 + 1}$.

\begin{conjecture}
For $k \geq 3$, the families $\mathcal A_k$ and $\mathcal R_k$ are polynomially $\tau$-bounded.
\end{conjecture}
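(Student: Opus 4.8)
The plan is to follow the blueprint of Theorems~\ref{thm:k-out-degree_bounded} and~\ref{thm:2-out-degree_bounded}. Since $\mathcal{R}_k \subseteq \mathcal{A}_k$, any polynomial $\tau$-bound valid on $\mathcal{A}_k$ automatically holds on $\mathcal{R}_k$, so it suffices to treat $\mathcal{A}_k$. As in the proof of Theorem~\ref{thm:k-out-degree_bounded}, the decisive reduction is the following: if for every $k$ one can exhibit a \emph{single} tournament $T_0=T_0(k)$, not depending on $H$, with $H\to T_0$ for every $H\in\mathcal{A}_k$, then Conjecture~\ref{con:EH} yields that every tournament on $p(H)^{1/\epsilon}$ vertices contains a copy of $T_0$ or of $\TT{p(H)}$, hence a homomorphic image of $H$, so that $\tau(H)\le p(H)^{1/\epsilon}$. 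Since the statement we wish to prove is unconditional, one would then try to remove Conjecture~\ref{con:EH} exactly as in Theorem~\ref{thm:2-out-degree_bounded}: arrange $T_0(k)$ to be a composition such as $T_0'\odot\CC{3}$ in which every triangle is dominated, and rerun the domination-graph argument --- Theorem~\ref{thm:domination_graphs} together with the matching and flipping trick from the proof of Theorem~\ref{thm:2-out-degree_bounded} --- to force such a composed tournament inside any sufficiently large tournament with no large transitive subtournament.

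The crux is therefore the existence of the universal tournament $T_0(k)$. For $\Dout{k}$ this came for free: by Observation~\ref{observation:k-dominated-subsets}, a random tournament of bounded size (depending only on $k$) has all $k$-subsets dominated, and $H$ is then embedded greedily in an order in which all arcs point backwards, each newly placed vertex having at most $k$ already-embedded out-neighbors. This argument fails for $\mathcal{A}_k$, because bounding $\omega_{ao}(H)$ does not bound the maximum out-degree: the $1$-subdivision of $\TT{m}$ lies in $\mathcal{A}_3$ for every $m$, yet has vertices of arbitrarily large out-degree. The greedy order must therefore be replaced by something that respects the layered geometry of longest directed paths, in the spirit of the ``interval-by-interval'' embeddings used for $\ell$-layered graphs in Theorem~\ref{thm:l-layered}. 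For $\mathcal{R}_k$ one further handle is available: $\omega_{ro}(H)\le k$ says exactly that the underlying graph of the ``directed distance at most $2$'' digraph of $H$ has clique number at most $k$, which one could try to combine with the acyclicity of $H$ to obtain a vertex partition of $H$ of bounded size that separates all pairs at directed distance at most $2$.

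Concretely, I would aim to prove that every $H\in\mathcal{A}_k$ admits a partition $V(H)=V_1\cup\dots\cup V_{g(k)}$ into layers in which consecutive layers interact only through bipartite patterns of complexity bounded by $k$, so that a blow-up of a fixed finite tournament receives $H$, i.e.\ $H\to T_0(k)$. The main obstacle --- and the reason the conjecture is hard --- is that a bound on $\omega_{ao}$ or $\omega_{ro}$ is a \emph{global} constraint which, unlike a bound on out-degree, does not localize to small neighbourhoods, so it is far from clear that such a bounded decomposition exists at all, or that any single finite $T_0(k)$ can absorb the whole family. A sensible first milestone is the case $k=3$: already for $\Dout{3}\subseteq\mathcal{R}_{10}$ polynomial $\tau$-boundedness is open and, as noted in the concluding remarks, reduces to verifying Conjecture~\ref{con:EH} for an explicit $11$-vertex tournament. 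It would also be worth deciding whether $\tau$ is in fact \emph{linearly} bounded on these families, as Theorem~\ref{thm:kth-powers-of-paths} suggests for their degree-bounded members.
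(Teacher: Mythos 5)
What you are trying to prove is not a theorem of the paper but its closing \emph{conjecture}: the authors state it as an open problem and offer no proof, so there is no argument of theirs to match yours against. Your text, read honestly, is also not a proof but a research programme, and you say so yourself: the entire construction hinges on exhibiting a single finite tournament $T_0(k)$ receiving every $H\in\mathcal{A}_k$ homomorphically, and you concede it is ``far from clear'' that such a $T_0(k)$ exists. That is precisely the missing idea, not a detail. The mechanism that produced such a universal target for $\Dout{k}$ --- Observation~\ref{observation:k-dominated-subsets} plus a greedy embedding along an acyclic order, placing each vertex above its at most $k$ already-embedded out-neighbours --- genuinely breaks for $\mathcal{A}_k$ and $\mathcal{R}_k$, since (as your subdivision example shows) bounding $\omega_{ao}$ or $\omega_{ro}$ does not bound out-degrees, and no substitute embedding scheme or bounded layer decomposition is supplied. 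Your proposed ``bounded-complexity layering'' is only a wish; nothing in the paper (the $\ell$-layered machinery of Theorem~\ref{thm:l-layered} included) gives it, and without it the reduction never starts.

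Two further steps would still be gaps even if $T_0(k)$ were granted. First, the route through Conjecture~\ref{con:EH} can only yield a \emph{conditional} bound, since that conjecture is equivalent to Erd\H{o}s--Hajnal and is open; the statement you are asked to prove is unconditional. Second, your plan to remove the conditionality ``exactly as in Theorem~\ref{thm:2-out-degree_bounded}'' overstates what that proof does: the matching-and-flipping argument via Theorem~\ref{thm:domination_graphs} is tailored to forcing the specific tournament $\TT{3}\odot\CC{3}$ (equivalently $\CC{3}\odot\CC{3}$ up to flips), which suffices only because out-degree at most $2$ lets one embed greedily once every pair is dominated. For an arbitrary $T_0(k)$ of the form $T_0'\odot\CC{3}$ there is no analogue of the claim inside that proof forcing it in every $\TT{n}$-free tournament of polynomial size; indeed the paper's concluding remarks show that already for $\Dout{3}$ the authors only know how to reduce the problem to verifying Conjecture~\ref{con:EH} for an explicit $11$-vertex tournament. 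So the proposal correctly identifies the obstacles but does not overcome any of them; the conjecture remains unproven.
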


\medskip\noindent
\textbf{Acknowledgments.} We would like to thank Xiaoyu He and Yuval Wigderson for showing that a modification of their proof \cite[Theorem~1.4]{FHW21} provides a quasi-polynomial bound presented in Section~\ref{sec:bounded-degree}.


\begin{thebibliography}{99}

\bibitem{AKS03}
N. Alon, M. Krivelevich, and B. Sudakov: Turán numbers of bipartite graphs and related Ramsey-type questions, \textit{Combin. Probab. Comput.} 12(5--6) (2003), 477--494.

\bibitem{APS2001} N. Alon, J. Pach, and J. Solymosi:
\textit{Ramsey-type theorems with forbidden subgraphs}, Combinatorica \textbf{21} (2001), 155--170.

\bibitem{BES85} W. Brown, P. Erd\H os, and M. Simonovits: \textit{Algorithmic solution of extremal digraph problems}, Tran. Amer. Math. Soc. \textbf{292} (1985), 421--449.

\bibitem{BES73} W.G. Brown, P. Erd\H os, and M. Simonovits: \textit{Extremal problems for directed graphs}, J.~Combin. Theory Ser.~B \textbf{15} (1973), 77--93.

\bibitem{BH69} W. Brown and F. Harary: \textit{Extremal digraphs}, Coll. Math. Soc. J. Bolyai \textbf{4} (1969), 135--198.

\bibitem{BS74} J. A. Bondy and M. Simonovits: \textit{Cycles of even length in graphs}, J. Combin. Theory Ser.~B \textbf{16} (1974), 97--105.

\bibitem{10x2021} N. Dragani\'c, F. Dross, J. Fox, A. Gir\~ao, F. Havet, D. Kor\'andi, W. Lochet, D. M. Correia, A. Scott, and B. Sudakov:
\textit{Powers of paths in tournaments}, Combin. Probab. Comput. \textbf{30}(6) (2021), 1--5.

\bibitem{EH89} P. Erd\H os and A. Hajnal: \textit{Ramsey-type theorems}, Discrete Math. \textbf{25} (1989), 37--52.

\bibitem{EM} P. Erd\H os and L. Moser: \textit{On the representation of directed graphs as unions of orderings}, Publ. Math. Inst. Hung. Acad. Sci. \textbf{9} (1964), 125--132.

\bibitem{ESS} P. Erd\H os and A. Stone:
\textit{On the structure of linear graphs}, Bull. Amer. Math. Soc. \textbf{52} (1946), 1087--1091.

\bibitem{FLMR95} D. C. Fisher, J. R. Lundgren, S. K. Merz, and K. B. Reid: \textit{Domination graphs of tournaments and digraphs}, Congr. Numer. \textbf{108} (1995), 97--107. 

\bibitem{FHW21} J. Fox, X. He, and Y. Wigderson: \textit{Ramsey numbers of sparse digraphs}, ArXiv preprint: 2105.02383 (2021).

\bibitem{FS} Z. F\"uredi and M. Simonovits:
\textit{The history of degenerate (bipartite) extremal graph problems}, Erd\H os Centennial \textbf{25} (2013), 169--264.

\bibitem{GJN22} A. Grzesik, O. Janzer, Z.L. Nagy: \textit{The Tur\'an number of blow-ups of trees}, J.~Combin. Theory Ser. B \textbf{156} (2022), 299--309.

\bibitem{HT76} R. H\"aggkvist and C. Thomassen: \textit{On pancyclic digraphs}, J. Combin. Theory Ser. B \textbf{20} (1976), 20--40.

\bibitem{KST54} T. K\H ovari, V. T. S\'os, and P. Tur\'an: \textit{On a~problem of K.~Zarankiewicz}, Colloq. Math. \textbf{3} (1954), 50--57.

\bibitem{MT} Y. Manoussakis and Z. Tuza: \textit{Ramsey numbers for tournaments}, Theoret. Comput. Sci. \textbf{263} (2001), 75--85.

\bibitem{Sopena16} \'E. Sopena: \textit{Homomorphisms and colourings of oriented graphs: An updated survey}, Discrete Math. \textbf{339} (2016), 1993--2005.

\bibitem{Stearns} R. Stearns: \textit{The voting problem}, Amer. Math. Monthly \textbf{66} (1959), 761--763.


\bibitem{Valadkhan} P. Valadkhan: \textit{Extremal oriented graphs and Erd\H os-Hajnal conjecture}, Master's thesis, Simon Fraser University, 2009.

\end{thebibliography}
\end{document}